\tikzset{taar/.style={double, double equal sign distance, -implies}}
\tikzset{amar/.style={->, dotted}}
\tikzset{dmar/.style={->, dashed}}
\tikzset{aar/.style={->, very thick}}
\newtheorem{theorem}{Theorem}[section]
\newtheorem{lemma}[theorem]{Lemma}
\newtheorem{proposition}[theorem]{Proposition}
\newtheorem{corollary}[theorem]{Corollary}
\newtheorem{question}[theorem]{Question}
\theoremstyle{definition}
\theoremstyle{remark}
\newtheorem{remark}[theorem]{Remark}
\newtheorem{example}[theorem]{Example}
\def\F{\mathbb{F}}
\def\d{\partial}
\def\CF {\mathit{CF}}
\newcommand \CFm {\CF^-}
\def\CFK{\mathit{CFK}}
\newcommand{\lab}[1]{$\scriptstyle #1$}
\author[J.\ Hom]{Jennifer Hom}
\address {School of Mathematics, Georgia Institute of Technology, Atlanta, GA 30332}
\email{hom@math.gatech.edu}
\author[J.\ Park]{JungHwan Park}
\address{Department of Mathematical Sciences, KAIST, Daejeon, South Korea}
\email{jungpark0817@kaist.ac.kr}
\numberwithin{equation}{section}
\title{Ribbon knots and iterated cables of fibered knots}
\begin{document}

\begin{abstract}
We define a knot to be \emph{$\gamma_0$-sharp} if its Seifert genus is detected by the concordance invariant $\gamma_0$, which arises from the immersed curve formalism in bordered Heegaard Floer homology. We show that a connected sum of $\gamma_0$-sharp fibered knots is ribbon exactly when it is of the form $K \mathbin{\#} -K$.  Consequently, either iterated cables of tight fibered knots are linearly independent in the smooth concordance group, or the slice--ribbon conjecture fails.
\end{abstract}

\maketitle

\section{Introduction}
A knot in the three-sphere $S^3$ is called \emph{slice} if it bounds a smoothly embedded disk in the four-ball $B^4$, and \emph{ribbon} if it bounds an immersed disk in $S^3$ with only ribbon singularities. Since resolving the ribbon singularities naturally yields a smooth slicing disk, every ribbon knot is necessarily slice. One of the most significant open problems in low-dimensional topology, originally posed by Fox~{\cite[Problem 25]{Fox:1961}}, asks whether the converse is also true. This question is known as the \emph{slice-ribbon conjecture}. 

By applying the celebrated theorem of Casson and Gordon~\cite{Casson-Gordon:1983}, which states that a fibered knot in a homology sphere is homotopically ribbon\footnote{A knot in a homology sphere is called \emph{homotopically ribbon} if it bounds a smooth disk in a homology four-ball, where the inclusion of the knot complement into the disk complement induces a surjective map on the fundamental group. Note that every ribbon knot in the three-sphere is homotopically ribbon.}  if and only if its closed monodromy extends over a handlebody, many interesting non-ribbon knots have been constructed. For a large number of them, the sliceness problem remains unresolved to this day.

For instance, Miyazaki~\cite{Miyazaki:1994} proved that a connected sum of iterated cables of torus knots is not ribbon unless it is of the form $K \mathbin{\#} -K$, where $K$ is a knot and $-K$ denotes its mirror image with reversed orientation. In contrast, the sliceness problem for connected sums of iterated cables of torus knots remains widely open. In particular, a well-known and long-standing conjecture of Rudolph~\cite{Rudolph:1976} states that the set of algebraic knots, which are in particular iterated cables of torus knots, is linearly independent in the smooth knot concordance group.

About ten years ago, Miyazaki's result was strengthened by Baker~\cite{Baker:2016}, who proved that the same holds for connected sums of fibered knots supporting the tight contact structure. For simplicity, we will refer to such knots as \emph{tight fibered knots} throughout this article. This led to a stronger version of Rudolph's conjecture, which states that prime tight fibered knots are linearly independent in the smooth knot concordance group (see~\cite[Corollary 5]{Baker:2016} and \cite[Lemma 3.1]{Abe-Tagami:2016}).

In this article, we provide additional examples of fibered knots that are not ribbon, using the results of Casson and Gordon~\cite{Casson-Gordon:1983} and Miyazaki~\cite{Miyazaki:1994}, combined with the concordance invariant $\gamma_0$, which arises from the interpretation of bordered Floer homology \cite{LOT} via immersed curves by Hanselman, Rasmussen, and Watson~\cite{HRW:2022, HRW:2024, Hanselman-Watson:2023}. We define a knot $K$ to be \emph{$\gamma_0$-sharp} if its Seifert genus agrees with the maximal Alexander grading of the summand of the knot Floer complex associated to $\gamma_0(K)$; that is, the Seifert genus is determined by $\gamma_0$. In particular, every tight fibered knot is $\gamma_0$-sharp (see Remark~\ref{rmk:tightfiberedgammashparp}).

%

\begin{theorem}\label{thm:main}
A connected sum of $\gamma_0$-sharp fibered knots  is ribbon if and only if it is of the form $K \mathbin{\#} -K$.
\end{theorem}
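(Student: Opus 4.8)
The reverse implication requires none of the hypotheses: for every knot $K$ the knot $K \mathbin{\#} -K$ is ribbon, via the standard ribbon disk for the connected sum of a knot with its reverse mirror image. So the whole content is the forward direction, which I would approach as a refinement of the arguments of Miyazaki~\cite{Miyazaki:1994} and Baker~\cite{Baker:2016}, with $\gamma_0$-sharpness playing the role that their special hypotheses (iterated torus knots, respectively tight fibered knots) play in those proofs.

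Write $J = K_1 \mathbin{\#} \cdots \mathbin{\#} K_n$ with each $K_i$ a $\gamma_0$-sharp fibered knot, and assume $J$ is ribbon. First I would reduce to the case that every $K_i$ is prime and nontrivial. A prime factor of a fibered knot is fibered, and $\gamma_0$-sharpness descends to the prime factors: since $\CFK$ is multiplicative under connected sum, the summand of $\CFK(J)$ selected by $\gamma_0$ respects the prime factorization, its top Alexander grading is additive over the factors, and each factor contributes at most its own Seifert genus, with the totals on the two sides both equal to $g(J)$. After deleting trivial factors (if none remain, $J$ is the unknot $= U \mathbin{\#} -U$), the goal becomes: produce a fixed-point-free involution $\sigma$ of $\{1, \dots, n\}$ with $K_{\sigma(i)} = -K_i$; since the prime decomposition of a knot is unique, this is exactly the assertion that $J$ is of the form $K \mathbin{\#} -K$.

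Now $J$ is fibered, with fiber the boundary connected sum of the fibers $F_i$, block monodromy $\phi_1 \mathbin{\natural} \cdots \mathbin{\natural} \phi_n$, and Seifert genus $g = \sum_i g(K_i)$; being ribbon, it is homotopically ribbon, so by Casson--Gordon~\cite{Casson-Gordon:1983} its closed monodromy $\widehat\phi$ on the closed genus-$g$ surface $\widehat\Sigma = \widehat\Sigma_1 \mathbin{\#} \cdots \mathbin{\#} \widehat\Sigma_n$ extends to an orientation-preserving homeomorphism of a handlebody $H$ with $\partial H = \widehat\Sigma$. I would then run Miyazaki's analysis of how the connected-sum $2$-spheres of $\widehat\Sigma$ and the block structure of $\widehat\phi$ interact with $H$: the kernel of $H_1(\widehat\Sigma) \to H_1(H)$ is a $\widehat\phi$-invariant Lagrangian $L$, and, via a disk-cutting induction internal to the argument, this is used to split $H$ as a boundary connected sum of handlebodies, each bounding one $\widehat\Sigma_i$ or a connected sum of two of them carrying mirror-related monodromies. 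The new input is that $\gamma_0$-sharpness rigidly ties $g(K_i)$ — hence the topology of $F_i$ and the size of the handlebody piece meeting $\widehat\Sigma_i$ — to the Floer data the extension must respect, which is precisely what forces this splitting to be compatible with the decomposition $\widehat\Sigma = \widehat\Sigma_1 \mathbin{\#} \cdots \mathbin{\#} \widehat\Sigma_n$, ruling out the degenerate monodromy configurations (fixing a homologically essential multicurve, or carrying a ``hidden'' Alexander-polynomial symmetry) in which Miyazaki's pairing would only be algebraic. The output is a fixed-point-free $\sigma$ for which the open book of $K_{\sigma(i)}$ is the mirror of that of $K_i$; since a fibered knot is determined by its open book, $K_{\sigma(i)} = -K_i$, and $J = K \mathbin{\#} -K$.

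The step I expect to be the main obstacle is exactly this passage from algebraic cancellation (Alexander modules, homological monodromy) to the geometric conclusion $K_{\sigma(i)} = -K_i$: neither $\gamma_0$ nor the Alexander module determines a knot, so one cannot match knot types from concordance-invariant data alone, and the entire weight of the argument falls on controlling the handlebody $H$ together with its extended homeomorphism and extracting an honest splitting respecting the connected-sum spheres. Making precise how $\gamma_0$-sharpness excludes the bad monodromy configurations, and pushing the disk-cutting induction through a connected sum rather than through a single prime fibered knot — so that it does not terminate in a handlebody extension that re-entangles the remaining summands — is where the technical heart of the proof lies.
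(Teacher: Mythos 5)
The paper's proof is far more economical than what you propose, and your plan, as described, contains a genuine gap that the paper sidesteps entirely. The actual argument does not re-open Miyazaki's handlebody analysis at all. Miyazaki's Theorem 5.5 is already an off-the-shelf black box: \emph{if each summand is minimal with respect to homotopy ribbon concordance among fibered knots in homology spheres, then the connected sum is ribbon if and only if it is of the form $K \mathbin{\#} -K$}. (A remark of Miyazaki, together with Baker's proof of his Theorem 3, lets one replace ``homology spheres'' with ``$S^3$'' using Geometrization.) So the only new content needed is the single lemma that \emph{a $\gamma_0$-sharp fibered knot is minimal under homotopy ribbon concordance among fibered knots in $S^3$}, and this is where $\gamma_0$-sharpness is used — not in the handlebody combinatorics. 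That minimality is established by pinching between two monotonicity statements: Gordon's Lemma 3.4 gives that if $J$ is homotopy ribbon concordant to a fibered $J'$ then $J = J'$ or $g(J) > g(J')$; while $\gamma_0$ is invariant under concordance in homology cobordisms (Remark~\ref{rmk:homologyconcordance}), so $\gamma_0$-sharpness of $J$ gives $g(J) = (\text{max Alexander grading of } \gamma_0(J)) = (\text{max Alexander grading of } \gamma_0(J')) \leq g(J')$. These force $J = J'$, and Miyazaki's theorem does the rest. No reduction to prime factors is needed.

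Your proposal, by contrast, tries to thread $\gamma_0$-sharpness directly into the Casson--Gordon/Miyazaki handlebody extension and disk-cutting induction, asserting that ``$\gamma_0$-sharpness rigidly ties $g(K_i)$ \ldots\ to the Floer data the extension must respect, which is precisely what forces this splitting to be compatible with the decomposition.'' That is the gap: you give no mechanism by which the connected Floer summand constrains a handlebody bounding the closed fiber or its extended homeomorphism, and you acknowledge yourself that you do not know how to make this precise. The reason the actual proof works is that no such mechanism is needed: the whole weight of $\gamma_0$-sharpness is borne by the genus-monotonicity-under-ribbon-concordance trick, after which Miyazaki's theorem applies verbatim. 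As written, your argument does not close, and the part you flag as ``the technical heart'' is in fact a step that should not be attempted at all. (Your reduction to prime factors is mathematically defensible — the max Alexander grading of $\gamma_0$ is subadditive under $\#$ while genus is additive, which pins down each factor — but it is also superfluous for this proof.)
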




For the proof, we revisit a key step in~\cite[Lemma 2]{Baker:2016}, where Baker establishes the minimality of tight fibered knots with respect to homotopy ribbon concordance among all fibered knots in $S^3$, and we extend this result to $\gamma_0$-sharp knots.  
Here, given two knots $K_0 \subset Y_0$ and $K_1 \subset Y_1$, where $Y_0$ and $Y_1$ are homology spheres, we say that $K_1$ is \emph{homotopy ribbon concordant} to $K_0$ if there exists a smooth annulus $A$ in a homology cobordism $X$ from $Y_0$ to $Y_1$ such that
\[
(\partial X, A \cap \partial X) = (Y_1, K_1) \sqcup -(Y_0, K_0),
\]
and the inclusion-induced maps
\[
\pi_1(Y_0 \smallsetminus K_0) \hookrightarrow \pi_1(X \smallsetminus A) \qquad \text{and} \qquad \pi_1(Y_1 \smallsetminus K_1) \twoheadrightarrow \pi_1(X \smallsetminus A)
\]
are injective and surjective, respectively.
Note that homotopy ribbon concordance generalizes the notion of \emph{ribbon concordance}, introduced by Gordon~\cite[Lemma 3.1]{Gordon:1981}, which was later shown by Agol~\cite[Theorem 1.1]{Agol:2022} to define a partial order.

Furthermore, we prove that $\gamma_0$-sharpness is preserved under satelliting by 1-bridge braided patterns, which include cabling (see Proposition~\ref{prop:preservedcabling}). In particular, iterated cables of tight fibered knots remain $\gamma_0$-sharp, even though they are no longer tight fibered when the cabling parameter is negative. As a consequence, combining this fact with the main theorem yields a new family of non-ribbon knots:


\begin{corollary}\label{cor:main}
Either distinct iterated cables of tight fibered knots are linearly independent in the smooth knot concordance group, or the slice-ribbon conjecture is false.
\end{corollary}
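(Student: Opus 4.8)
The plan is to derive Corollary~\ref{cor:main} as a direct consequence of Theorem~\ref{thm:main} together with the satellite-preservation result, Proposition~\ref{prop:preservedcabling}. Suppose, for contradiction, that the slice-ribbon conjecture holds but that distinct iterated cables of tight fibered knots are \emph{not} linearly independent in $\cC$. Then there is a nontrivial finite linear combination of such cables, say $J = \bigl(\connsum_i n_i K_i\bigr)$ with $n_i \in \Z \smallsetminus \{0\}$ and the $K_i$ pairwise distinct iterated cables of tight fibered knots, that is slice; after moving negative coefficients to the other side and absorbing signs into mirrors, this says a connected sum $\connsum_j L_j$ of iterated cables of tight fibered knots (allowing mirrors, which are again iterated cables of tight fibered knots up to the usual orientation conventions) is slice, and hence by assumption ribbon.

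The next step is to observe that each $L_j$ is a fibered knot — iterated cabling of a fibered knot yields a fibered knot — and, crucially, each $L_j$ is $\gamma_0$-sharp. This is exactly where Proposition~\ref{prop:preservedcabling} enters: tight fibered knots are $\gamma_0$-sharp by Remark~\ref{rmk:tightfiberedgammashparp}, and $\gamma_0$-sharpness is preserved under satelliting by $1$-bridge braided patterns, which include all cabling patterns regardless of the sign of the framing parameter. Iterating, every iterated cable of a tight fibered knot (and its mirror) is a $\gamma_0$-sharp fibered knot. Therefore $\connsum_j L_j$ is a connected sum of $\gamma_0$-sharp fibered knots which is ribbon, so Theorem~\ref{thm:main} forces it to be of the form $K \mathbin{\#} -K$.

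The final step is a bookkeeping argument showing that $\connsum_j L_j = K \mathbin{\#} -K$ is incompatible with the $L_j$ being distinct iterated cables of tight fibered knots appearing with the multiplicities coming from a genuinely nontrivial relation. Here one uses unique factorization of knots under connected sum (Schubert): the prime factors of $K \mathbin{\#} -K$ come in mirror pairs, so in the original relation every iterated cable that occurs must occur together with its mirror and with equal multiplicity. Iterated cables of tight fibered knots are prime (a nontrivial cable knot is prime), so matching prime factors shows the original linear combination $\connsum_i n_i K_i$ was, after all, the trivial relation — that is, the $K_i$ that appeared did so only in canceling mirror pairs, contradicting our assumption that the relation was nontrivial among \emph{distinct} iterated cables. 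This contradiction establishes the dichotomy.

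I expect the only real subtlety to be the last bookkeeping step, specifically the need to know that distinct iterated cables of tight fibered knots remain distinct (and distinct from their mirrors, when relevant) as elements to be matched under Schubert's unique factorization, so that a relation of the form $K \mathbin{\#} -K$ genuinely pairs each cable with its own mirror and cannot be satisfied "accidentally" by coincidences among the $K_i$. This is a standard consequence of the fact that cabling is injective on isotopy classes together with the classification of companions and patterns of satellite knots, but it should be stated carefully; everything else is a formal deduction from Theorem~\ref{thm:main} and Proposition~\ref{prop:preservedcabling}.
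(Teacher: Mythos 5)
Your overall strategy tracks the paper's quite closely up to the point where Theorem~\ref{thm:main} is applied to conclude that the ribbon connected sum must be of the form $L \# -L$; from there the two arguments diverge, and it is precisely your divergence that leaves a gap.

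The paper's last step is an invariant computation, not a unique-factorization argument. After assuming $c_1 > 0$ and invoking Theorem~\ref{thm:main}, the paper concludes $K_1 = -K_i$ for some $i$ (here Schubert is implicit), and then rules this out directly via the $\varepsilon$-invariant: since iterated cables of tight fibered knots satisfy $\varepsilon = 1$ by~\cite[Corollary~4 and Theorem~2]{Hom:2014}, one has $\varepsilon(K_1) = 1$ while $\varepsilon(-K_i) = -1$, which is the contradiction. Your proposal instead tries to close the argument using only Schubert unique factorization plus ``cabling is injective on isotopy classes together with the classification of companions and patterns of satellite knots.'' This is not enough. The Schubert matching does force the prime factors to come in mirror pairs, but you then assert that this contradicts nontriviality of the relation, and it does not: the relation $K_i \# K_j$ with $K_j = -K_i$ is a \emph{nontrivial} relation ($n_i = n_j = 1$) among two \emph{distinct} iterated cables, so nothing is contradicted unless one separately shows that $K_i = -K_j$ cannot happen for distinct iterated cables of tight fibered knots. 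The topological facts you cite (JSJ uniqueness, Gordon--Luecke) only reduce the question one cabling level at a time, eventually leaving you with two tight fibered companions $J_i$ and $J_j$ with $J_i = -J_j$; ruling that out requires some invariant or contact-geometric input (e.g.\ $\tau(J) = g(J) > 0$ for nontrivial tight fibered $J$, so $\tau(J_i) > 0 > \tau(-J_j)$, or the $\varepsilon$-argument the paper actually uses), which is exactly the ingredient absent from your ``standard consequence'' claim. A minor additional inaccuracy: your parenthetical that mirrors of iterated cables of tight fibered knots ``are again iterated cables of tight fibered knots up to the usual orientation conventions'' is false for nontrivial base knots (the mirror of a nontrivial tight fibered knot is not tight fibered); fortunately this does not affect the application of Theorem~\ref{thm:main}, since mirrors of $\gamma_0$-sharp fibered knots are still $\gamma_0$-sharp fibered, but it is worth correcting. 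In short, the Schubert bookkeeping is a legitimate route to $K_1 = -K_i$, but the punchline needs the $\varepsilon$ (or $\tau$) computation that the paper supplies; your replacement of it by purely topological cabling facts does not work.
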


In the following corollary, we present a special case of Corollary~\ref{cor:main}, which provides many simple and interesting new examples of non-ribbon knots whose sliceness is not easily detected.  
Notably, all of these knots are algebraically slice. Throughout this article, we use the notation $K_{p,q}$ to denote the $(p, q)$-cable of a knot $K$, where $p$ indicates the longitudinal winding. We assume $p > 1$, since $K_{p,q}$ is isotopic to $K_{-p,-q}$ and $K_{1,q}$ is isotopic to $K$.

\begin{corollary}\label{cor:examples}
Let $K$ and $J$ be distinct iterated cables of tight fibered knots. Then, for each pair of distinct integers $q_1$ and $q_2$ that are relatively prime to $p$, the knot
\[
P(K, J, p, q_1, q_2) := K_{p,q_1} \mathbin{\#} -K_{p,q_2} \mathbin{\#} J_{p,q_2} \mathbin{\#} -J_{p,q_1}
\]
is algebraically slice but not ribbon.
\end{corollary}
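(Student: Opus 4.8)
The plan is to reduce to Theorem~\ref{thm:main}: I would show that $P:=P(K,J,p,q_1,q_2)$ is a connected sum of $\gamma_0$-sharp fibered knots but is \emph{not} of the form $K'\mathbin{\#}-K'$, and separately verify algebraic sliceness by a Seifert-form computation. That $P$ is a connected sum of fibered knots is immediate: a tight fibered knot is fibered, the $(p,q)$-cable of a fibered knot is fibered (and since $\gcd(p,q_i)=1$ these cables are knots, not links), and the mirror of a fibered knot is fibered. For $\gamma_0$-sharpness, Remark~\ref{rmk:tightfiberedgammashparp} gives it for tight fibered knots, Proposition~\ref{prop:preservedcabling} propagates it through the iterated cabling, so $K_{p,q_1}$ and $J_{p,q_2}$ are $\gamma_0$-sharp; and $\gamma_0$-sharpness is preserved under mirroring, since the Seifert genus is a mirror invariant and, by the conjugation symmetry of the knot Floer complex, the summand associated to $\gamma_0(-L)$ is the (conjugation-symmetric) dual of the one associated to $\gamma_0(L)$, with the same maximal Alexander grading $g(L)$. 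Hence $-K_{p,q_2}$ and $-J_{p,q_1}$ are $\gamma_0$-sharp as well, and Theorem~\ref{thm:main} applies to $P$.

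For algebraic sliceness, recall Litherland's description of the Seifert form of a satellite: the Seifert form of the $(p,q)$-cable $A_{p,q}$ is congruent to the orthogonal direct sum of a Seifert form for the torus knot $T_{p,q}$ and a form depending only on $A$ and the winding number $p$. Consequently $[A_{p,q}]-[A_{p,q'}]=[T_{p,q}]-[T_{p,q'}]$ in the algebraic concordance group whenever $q,q'$ are coprime to $p$, so, applying this with $A=K$ and with $A=J$,
\[
[P] \;=\; \bigl([K_{p,q_1}]-[K_{p,q_2}]\bigr)+\bigl([J_{p,q_2}]-[J_{p,q_1}]\bigr) \;=\; \bigl([T_{p,q_1}]-[T_{p,q_2}]\bigr)+\bigl([T_{p,q_2}]-[T_{p,q_1}]\bigr) \;=\; 0,
\]
and therefore $P$ is algebraically slice.

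It remains to show $P$ is not ribbon. Suppose it were; Theorem~\ref{thm:main} gives $P\cong K'\mathbin{\#}-K'$ for some knot $K'$. Each summand of $P$ is a $(p,q)$-cable with $p\ge 2$ (using $-A_{p,q}=(-A)_{p,-q}$) of a nontrivial knot, hence prime, and a cable determines its companion and cabling parameters up to isotopy; so uniqueness of prime decomposition identifies the prime summands of $P$ with the multiset $\{K_{p,q_1},-K_{p,q_2},J_{p,q_2},-J_{p,q_1}\}$ and those of $K'\mathbin{\#}-K'$ with $\{a,-a,b,-b\}$. Comparing the two multisets, the three ways of partitioning $\{K_{p,q_1},-K_{p,q_2},J_{p,q_2},-J_{p,q_1}\}$ into mirror pairs force, respectively, $q_1=q_2$, or $K=J$, or $J=-K$ together with $q_1=-q_2$; the first two are excluded by hypothesis. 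For the last, note that the innermost companion of an iterated cable of a (nontrivial) tight fibered knot is such a knot $L$, with $\tau(L)=g(L)>0$; the innermost companion of $-K$ is then $-L$, which has $\tau(-L)<0<g(-L)$ and hence is not tight fibered. Since the innermost companion is determined by the isotopy class (it is read off from the JSJ decomposition of the complement), $-K$ is not an iterated cable of a tight fibered knot, whereas $J$ is, so $J\ne -K$. This contradiction completes the argument. The only real obstacle is this last step: reconciling uniqueness of prime (equivalently, satellite) decompositions with the failure of mirror-symmetry for iterated cables of tight fibered knots, including the degenerate possibility $J=-K$; everything else is routine once Theorem~\ref{thm:main}, Proposition~\ref{prop:preservedcabling}, and Litherland's formula are in hand.
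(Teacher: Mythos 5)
Your proposal follows the same basic strategy as the paper --- reduce to Theorem~\ref{thm:main}, show the four summands cannot be matched into mirror pairs, and invoke a Seifert-form argument for algebraic sliceness --- but you package it differently and, in one place, use a more delicate invariant than the paper does. The paper routes through Corollary~\ref{cor:main}: it verifies only that $K_{p,q_1}$, $K_{p,q_2}$, $J_{p,q_1}$, $J_{p,q_2}$ are pairwise distinct (via Alexander polynomial for a fixed companion, and uniqueness of the JSJ decomposition together with Gordon--Luecke for distinct companions), and then lets the linear-independence argument from Corollary~\ref{cor:main} --- which is what actually applies Theorem~\ref{thm:main} --- do the rest. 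That argument rules out the mirror-pairing $K_1 = -K_i$ by comparing $\varepsilon$-invariants: an iterated cable of a (nontrivial) tight fibered knot has $\varepsilon = 1$ by Hom's cabling theorem, while its mirror has $\varepsilon = -1$. You instead apply Theorem~\ref{thm:main} directly to $P$, check explicitly that each summand is a $\gamma_0$-sharp fibered knot (including the mirror-preservation observation, which the paper leaves implicit inside Corollary~\ref{cor:main}'s proof), and enumerate the three pairings of a four-element multiset into mirror pairs. This is perfectly sound and arguably more transparent than the paper's route through Corollary~\ref{cor:main}.

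The one place where you are on thinner ice is the third pairing, $J=-K$. You rule it out by arguing that the \emph{innermost} JSJ companion of an iterated cable of a tight fibered knot has $\tau = g > 0$, hence the innermost companion of $-K$ has $\tau < 0$ and cannot be tight fibered. But the innermost companion (in the JSJ sense) of $K$ need not coincide with the tight fibered knot $L$ you cabled: if $L$ is itself a satellite (a perfectly legitimate possibility for a tight fibered knot), the innermost companion is the innermost companion of $L$, and you have not argued that that knot is tight fibered. The paper's $\varepsilon$-invariant argument sidesteps this entirely, since $\varepsilon(L) = 1$ propagates to $\varepsilon = 1$ for every iterated cable regardless of the depth of the JSJ tree, while $\varepsilon(-K) = -1$; so $J = -K$ forces $1 = -1$. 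I would replace your $\tau$-of-the-innermost-companion step with this $\varepsilon$ computation (or supply a lemma that the innermost companion of a tight fibered knot is tight fibered). Two smaller remarks: the justification you give for $\gamma_0$-sharpness being mirror-invariant should cite duality/mirroring of the immersed curve (or of $\CFK$) rather than conjugation symmetry, which is a symmetry of a single knot's complex rather than a relation between $K$ and $-K$; and for the algebraic sliceness you are essentially re-deriving the result the paper cites to Livingston--Melvin and Kearton, which is fine but unnecessary.
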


We make two remarks on the corollary. 
First, we discuss the algebraic sliceness and non-ribbonness of the knots in question. The fact that the knot is algebraically slice follows from work of Livingston and Melvin~\cite{Livingston-Melvin:1983} (see also~\cite{Kearton:1979}), and holds for any choice of knots $K$ and $J$ and any integers $p$,  $q_1$ and $q_2$ with $q_1$ and $q_2$ relatively prime to $p$.
The non-ribbonness of $P(K, J, p, q_1, q_2)$ follows from work of Miyazaki~\cite{Miyazaki:1994} in the case where $K$ and $J$ are distinct iterated cables of torus knots, and from work of Baker~\cite{Baker:2016} when $K$ and $J$ are distinct tight fibered knots and $q_1, q_2 > 0$, as each cable remains a tight fibered knot under these conditions.
As mentioned earlier, our contribution lies in extending these results to the case where $K$ and $J$ are arbitrary iterated cables of tight fibered knots, and in particular, the parameters $q_1$ and $q_2$ are allowed to be negative integers.

Secondly, while the slice--ribbon conjecture predicts that each knot from Corollary~\ref{cor:examples} is not slice, proving this remains challenging in general with current techniques in knot concordance. Nonetheless, notable results by Hedden, Kirk, and Livingston~\cite{Hedden-Kirk-Livingston:2012}, Conway, Kim, and Politarczyk~\cite{Conway-Kim-Politarczyk:2023}, and Zhang~\cite{Zhang} prove that certain families of such knots can indeed be shown to be non-slice.

More precisely, sliceness is obstructed in~\cite[Theorem 3]{Hedden-Kirk-Livingston:2012} using Casson--Gordon theory~\cite{Casson-Gordon:1986} in the case where $K$ is the torus knot $T_{2,3}$, $J$ is the unknot, $p = 2$, and $q_1$ and $q_2$ are chosen appropriately. This result was later generalized by Zhang~\cite[Theorem 2]{Zhang}, using the Ozsv\'{a}th--Szab\'{o} $d$-invariant~\cite{Oz-Sz:dinvt} of the double branched covers, to the case where $K$ is a nontrivial $L$-space knot, $J$ is the unknot, $p = 2$, and $q_1$ and $q_2$ are distinct prime integers greater than $3$.
In addition,~\cite[Theorem 1.1]{Conway-Kim-Politarczyk:2023}, using metabelian twisted Blanchfield pairings~\cite{Miller-Powell:2018,BCP:2022-1,BCP:2018-1,BCP:2024-3}, establishes non-sliceness when $K$ and $J$ are certain positively iterated torus knots, and $p$, $q_1$, and $q_2$ are suitable positive integers.
Furthermore, when $q_1$ and $q_2$ have opposite signs, one can apply 
the cabling formula~\cite{Hom:2014}  for the Ozsv\'{a}th--Szab\'{o} $\tau$-invariant~\cite{Oz-Sz:2003}, derived from bordered Heegaard Floer homology, to conclude that the corresponding knot is not slice.

However, in general, many examples from Corollary~\ref{cor:examples} remain undetectable by any known concordance invariants.  
For instance, it was verified in~\cite[Proposition 8.2]{Hedden-Kirk-Livingston:2012} that $P(K, U, p, q_1, q_2)$ has vanishing Ozsv\'{a}th--Szab\'{o} $\tau$-invariant~\cite{Oz-Sz:2003} and Rasmussen's $s$-invariant~\cite{Rasmussen:2010} when $K$ is a positively iterated torus knot.  
Moreover,~\cite[Section 1.2]{Conway-Kim-Politarczyk:2023} shows that the $\Upsilon$-invariant of Ozsv\'{a}th--Stipsicz--Szab\'{o}~\cite{OSS:2017} also vanishes for many such examples.  
In fact, we unify and generalize these observations by proving that, for a broad class of knots, all concordance invariants derived from the knot Floer complex~\cite{OS-knots, Rasmussen-thesis} and the involutive knot Floer complex~\cite{HendricksManolescu} vanish, an observation that may be of independent interest.


\begin{proposition}\label{prop:examples}
Let $K$ be a nontrivial L-space knot, and let $U$ denote the unknot. Then, for each pair of distinct odd integers $q_1$ and $q_2$, the knot
\[
P(K, U, 2, q_1, q_2) := K_{2,q_1} \mathbin{\#} -K_{2,q_2} \mathbin{\#} T_{2,q_2} \mathbin{\#} -T_{2,q_1}
\]
is algebraically slice but not ribbon. Moreover, its knot Floer complex  is locally equivalent to that of the unknot if and only if the pair $(q_1, q_2)$ satisfies one of the following:
\begin{itemize}
    \item $q_1, q_2 > 4g(K)$,
    \item $0 < q_1, q_2 < 4g(K)$, or
    \item $q_1, q_2 < 0$,
\end{itemize}
where $g(K)$ is the Seifert genus of $K$. In particular, in all of the above cases, all concordance invariants derived from the knot Floer complex vanish.

Furthermore, when $q_1, q_2 > 4g(K)$, the obstruction coming from the involutive knot Floer complex vanishes. When $q_1, q_2 < 0$, the obstruction coming from the Ozsv\'{a}th--Szab\'{o} $d$-invariant, together with the Hendricks--Manolescu $\overline{d}$- and $\underline{d}$-invariants of the double branched covers, vanish.
\end{proposition}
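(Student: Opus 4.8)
The plan is to break the statement into three parts: (1) algebraic sliceness and non-ribbonness, (2) the local equivalence classification, and (3) the vanishing of the involutive and $d$-invariant obstructions in the two extreme ranges. Part (1) is the easiest: algebraic sliceness follows from Livingston--Melvin as cited in the remarks after Corollary~\ref{cor:examples}, since $P(K,U,2,q_1,q_2)$ has the prescribed cable/mirror form; non-ribbonness is an instance of Corollary~\ref{cor:main} (equivalently Theorem~\ref{thm:main}), because $T_{2,3}$-type torus knots and more generally nontrivial $L$-space knots are tight fibered, hence $\gamma_0$-sharp, and their iterated cables $K_{2,q_i}$, $T_{2,q_i}=U_{2,q_i}$ remain $\gamma_0$-sharp fibered knots by Proposition~\ref{prop:preservedcabling}; the four-component connected sum is then not of the form $L\#-L$ unless $q_1=q_2$, which is excluded.

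For part (2), the main work is computing the knot Floer complex $\CFK$ of each cable summand up to local equivalence. The key input is the behavior of $\CFK^\infty$ of $K_{2,q}$ for an $L$-space knot $K$: when $q > 2g(K_{2,q})=4g(K)-1$, i.e.\ $q\ge 4g(K)$ (odd), the cable $K_{2,q}$ is itself an $L$-space knot (this is the classical Hedden/Hom range), so its local equivalence class is a ``staircase'' determined by its Alexander polynomial; when $q<0$, $K_{2,q}$ is (up to the usual parameter conventions) the mirror of a cable of $K$ with positive parameter, so its local class is the dual staircase. In the intermediate range $0<q<4g(K)$ the cable is no longer an $L$-space knot, but its local equivalence class can still be pinned down using the cabling formula for $\CFK$ (via bordered Floer / the immersed curve for $K$) — the relevant point is that the ``box'' summands that appear are locally trivial, so only the staircase part survives. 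I would then observe that the staircase of $K_{2,q}$ and the staircase of the torus knot $T_{2,q}$ agree in each of the three listed cases — this is where the specific combinatorics of $(q_1,q_2)$ enters: in the ranges $q_1,q_2>4g(K)$ or $0<q_1,q_2<4g(K)$ or $q_1,q_2<0$ the genus-detecting part of $\CFK(K_{2,q_i})$ is exactly that of $T_{2,q_i}$, so the four summands cancel in pairs in the local equivalence group and the total complex is locally trivial. Conversely, if $q_1$ and $q_2$ lie in different ranges (say one $>4g(K)$ and one in $(0,4g(K))$, or one positive and one negative), the staircases no longer match and a local invariant — e.g.\ some $\Upsilon$ or $\varphi_j$ value — is nonzero; this gives the ``only if.'' Vanishing of all $\CFK$-derived concordance invariants is then immediate since they factor through the local equivalence class.

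For part (3), in the range $q_1,q_2>4g(K)$ each summand is an $L$-space knot, and for $L$-space knots the involutive knot Floer complex is determined by $\CFK$ together with the unique (up to local equivalence) involution on a staircase, which is the ``obvious'' one; so the involutive local equivalence classes also cancel in pairs, and the involutive obstruction vanishes. In the range $q_1,q_2<0$, rather than controlling $\CFKi$ I would pass to the double branched cover $\Sigma_2$: since $\Sigma_2(K_{2,q})$ is obtained by surgery on a knot related to $K$ and the relevant $q_i<0$ data make the $d$-, $\du$-, and $\dl$-invariants of the two pairs of branched covers agree (the branched cover of $T_{2,q}$ being a lens space, with the cable contributing a matching correction term), these invariants cancel across the connected sum. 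The main obstacle throughout is part (2) in the intermediate range $0<q<4g(K)$: there one genuinely needs the structure of the immersed curve of a cabled $L$-space knot to argue that the non-staircase summands are locally trivial and that the surviving staircase coincides with that of $T_{2,q}$; establishing this cleanly — ideally by an explicit identification of the ``distinguished'' component of the immersed curve invariant of $K_{2,q}$, using $\gamma_0$-sharpness from Proposition~\ref{prop:preservedcabling} to locate the genus — is the technical heart of the argument.
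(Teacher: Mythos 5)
Your outline for parts (1) and (3) is broadly on the right track, but part (2) — which you correctly identify as the technical heart — contains a genuine error that would derail the argument.

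You assert that ``the staircase of $K_{2,q}$ and the staircase of the torus knot $T_{2,q}$ agree in each of the three listed cases,'' so that ``the four summands cancel in pairs in the local equivalence group.'' This pairwise cancellation is false: for a nontrivial $L$-space knot $K$, the complex $\CFK(K_{2,q})$ is never locally equivalent to $\CFK(T_{2,q})$, for the simple reason that $\tau(K_{2,q}) = 2\tau(K) + \frac{q-1}{2}$ (in the relevant ranges) differs from $\tau(T_{2,q}) = \frac{q-1}{2}$ when $\tau(K) > 0$; likewise their genera and staircase shapes differ. So neither the pairing $K_{2,q_i}$ with $-T_{2,q_i}$ nor $K_{2,q_1}$ with $-K_{2,q_2}$ cancels. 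The correct mechanism is a four-term (``crossed'') cancellation: one shows $K_{2,q_1} \mathbin{\#} T_{2,q_2}$ is locally equivalent to $K_{2,q_2} \mathbin{\#} T_{2,q_1}$. The paper establishes this through a cabling formula (Proposition~\ref{prop:cableL}) giving the standard-complex sequence for $K_{2,q}$, together with a connected-sum computation (Proposition~\ref{prop:K+T}) showing that summing with $T_{2,q'}$ lengthens or annihilates the alternating $\pm 1$ entries in the middle of that sequence. The upshot is that the middle block of the standard complex for $K_{2,q_1} \mathbin{\#} T_{2,q_2}$ has length depending only on $q_1 + q_2$ (within a fixed regime), so swapping $q_1$ and $q_2$ leaves the local class unchanged. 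Your suggestion to discard ``box'' summands and keep only a ``staircase'' also misrepresents the intermediate range $0 < q < 4g(K)$: there $\gamma_0(K_{2,q})$ is not a staircase (the final vertical step has length $2b_n$ rather than $2b_n - 1$), and the comparison requires tracking exactly how the middle alternating block of the cable interacts with the alternating block of $T_{2,q'}$ rather than a generic ``only the staircase survives'' reduction.

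On the ``only if'' side and on part (3) your ideas are serviceable but need to be tied to the sequence computation above. The paper handles mixed signs via the cabling formula for $\tau$, and distinguishes the two positive regimes by the parity of $b'_n$ ($2b_n-1$ versus $2b_n$); a vague appeal to ``some $\Upsilon$ or $\varphi_j$'' is not an argument. For $q_1, q_2 > 4g(K)$, the involutive statement does rest on the basic involution for $L$-space knots, but $K_{2,q_1} \mathbin{\#} T_{2,q_2}$ is a connected sum, not itself an $L$-space knot, so one must actually verify (as in Lemmas~\ref{lem:X}--\ref{lem:YZ}) that the staircase summand of the connected sum is $\iota_K$-equivariantly split off with the basic involution; this does not follow automatically from each factor having the basic involution. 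For $q_1, q_2 < 0$, passing to $\Sigma_2$ and comparing $d$-, $\du$-, $\dl$-invariants is the right move, but the mechanism is that the surgery formula $d$-invariants of $S^3_{q_i}(K \# K^r)$ agree with those of $S^3_{q_i}(U)$ because $\nu^+(-(K\#K^r)) = 0$, not because the branched cover of the cable ``contributes a matching correction term''; the latter phrasing skips the $V_i$-sequence argument that makes the comparison go through.
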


Recall that every L-space knot is a tight fibered knot~\cite{OS:fibered, Ghiggini:2008, Ni:2007, Hedden:2010}, yet the class of L-space knots is much broader than that of torus knots (see, for example,~\cite{OS:lensspace, Saito-Teragaito:2010}). Since the proposition above provides non-ribbon knots beyond those considered in~\cite{Miyazaki:1994, Baker:2016}, this naturally leads to the following question, simply by taking $q_2 = -1$:

\begin{question}
Is there a nontrivial L-space knot $K$ and an integer $q<-1$ such that $K_{2,q}$ is concordant to $T_{2,q} \mathbin{\#} K_{2,-1}$?
\end{question}

\subsection*{Organization}
In Section~\ref{sec:background}, we review the knot Floer complex and the concordance invariant~$\gamma_0$. In Section~\ref{sec:mainthm}, we present the proof of the main theorem and its corollaries. Finally, in Section~\ref{sec:computations}, we prove Proposition~\ref{prop:examples}.

\subsection*{Acknowledgements} 
We would like to thank Jaewon Lee for helpful conversations. The first author was partially supported by NSF grants DMS-2104144 and DMS-2506400, and Georgia Tech's Elaine M. Hubbard Distinguished Faculty Award.
The second author is partially supported by the Samsung Science and Technology Foundation (SSTF-BA2102-02) and the NRF grant RS-2025-00542968.

\section{Background on knot Floer complexes and immersed curves}\label{sec:background}

We assume the reader is familiar with the knot Floer homology package of \cite{OS-knots} and \cite{Rasmussen-thesis}; for survey articles on these invariants, we recommend \cite{OS-intro}, \cite{Hom-survey}, and \cite{Hom-lecture}. We use the conventions of \cite{Zemke-link} and \cite{DHST-conc}, and view the knot Floer complex $\CFK(K)$ as a chain complex over the ring $\F[U, V]$. The knot Floer complex can be endowed with the additional structure of an endomorphism $\iota_K$, coming from involutive Heegaard Floer homology \cite{HendricksManolescu}.

If two knots $K$ and $J$ are concordant, then their knot Floer complexes $\CFK(K)$ and $\CFK(J)$ are \emph{locally equivalent}: there exist graded chain maps
\[
f \colon \CFK(K) \to \CFK(J) \qquad \textup{ and } \qquad g \colon \CFK(J) \to \CFK(K)
\]
that induce isomorphisms in $U^{-1}H_*$.  
We will also say that two knots $K$ and $J$ are \emph{locally equivalent} if their knot Floer complexes are locally equivalent.

Local equivalence can be considered in both weaker and stronger settings. The weaker setting is to quotient by the ideal $(UV)$, and consider the knot Floer complex over $\F[U,V]/(UV)$, as in \cite[Definition 2.3]{DHST-conc}; this is the same as $\varepsilon$-equivalence, considered in \cite[Section 4.1]{Hom-CFKconc}. The stronger setting is to require that the maps $f$ and $g$ homotopy-commute with $\iota_K$, as in~\cite[Definition~2.4]{Zemke-connected}, which we will refer to as \emph{$\iota_K$-local equivalence}.

The local equivalence class of $\CFK(K)$ over $R=\F[U,V]$ or $\F[U,V]/(UV)$ admits a unique minimal (with respect to rank) representative, called the \emph{connected complex over $R$}, which has antecedents in the Pin(2)-monopole setting \cite[Section 2.5]{Stoffregen-Pin2} and the involutive Heegaard Floer setting \cite[Section 3]{HHL}; see also \cite[Section 6]{Zhou-homconc} and \cite[Section 5]{HKPS:2022}.

As observed in \cite[Proposition 2]{Hanselman-Watson:2023}, the connected complex over $\F[U,V]/(UV)$ is exactly the data of the component $\gamma_0$ coming from the immersed curve associated to $K$ \cite{HRW:2024}. Moreover, following \cite[Section 4]{DHST-conc}, the connected complex over $\F[U,V]/(UV)$, or equivalently $\gamma_0$, can be completely described by a finite sequence of nonzero integers, specifying its standard complex representative. In Section~\ref{sec:computations}, we will say that a knot $K$ is locally equivalent to a sequence if its complex $\CFK(K)$ is locally equivalent to the standard complex associated to that sequence.

\begin{remark}\label{rmk:homologyconcordance}
The connected complex is a concordance invariant and in fact is an invariant of concordances in homology cobordisms \cite{DHST-homconc}. It follows from the proof of \cite[Proposition 2]{Hanselman-Watson:2023} that $\gamma_0$ is an invariant of concordances in homology cobordisms as well.
\end{remark}

Recall from~\cite{OS-genus} that the knot Floer complex~$\CFK(K)$ detects the genus of~$K$, in that the genus is the maximum Alexander grading in the support of~$\CFK(K)$. We say that $K$ is \emph{$\gamma_0$-sharp} if the genus of~$K$ is the maximum Alexander grading (i.e., the maximum vertical coordinate) in the support of~$\gamma_0$.

\begin{remark}\label{rmk:tightfiberedgammashparp}
Observe that the maximum Alexander grading in the support of $\gamma_0$ is at least $|\tau(K)|$, where $\tau$ is the Ozsv\'ath-Szab\'o concordance invariant from \cite{Oz-Sz:2003}. Hence if $g(K)=\tau(K)$, then $K$ is $\gamma_0$-sharp.
\end{remark}

We illustrate these ideas with the following examples.

\begin{example}
Consider the knot $K=T_{2,3} \# T_{2,3}$. Its knot Floer complex is depicted below:

\[
\begin{tikzpicture}[scale=1]
	\draw[step=1, black!30!white, very thin] (-1.5, -0.5) grid (4.5, 3.5);

	\filldraw (0.3, 2.7) circle (2pt) node[label=above:{\lab{U^2 a}}] (a) {};
	\filldraw (1.3, 2.7) circle (2pt) node[label=above:{\lab{U b}}] (b) {};
	\filldraw (1.3, 1.3) circle (2pt) node[label=left:{\lab{U V c}}] (c) {};
	\filldraw (2.7, 1.3) circle (2pt) node[label=right:{\lab{V d}}] (d) {};
	\filldraw (2.7, 0.3) circle (2pt) node[label=right:{\lab{V^2 e}}] (e) {};

	\draw[->] (b) to (a);
	\draw[->] (b) to (c);
	\draw[->] (d) to (c);
	\draw[->] (d) to (e);

	\filldraw (2.7, 2.7) circle (2pt) node[label=above:{\lab{f}}] (f) {};
	\filldraw (1.7, 2.7) circle (2pt) node[label=above:{\lab{U g}}] (g) {};
	\filldraw (1.7, 1.7) circle (2pt) node[label=left:{\lab{U V i}}] (i) {};
	\filldraw (2.7, 1.7) circle (2pt) node[label=right:{\lab{UV h}}] (h) {};
	
	\draw[->] (f) to (g);
	\draw[->] (f) to (h);
	\draw[->] (g) to (i);
	\draw[->] (h) to (i);

\end{tikzpicture}
\]
Below left is the immersed curve for $K=T_{2,3} \# T_{2,3}$ and below right is the immersed curve $\gamma_0$:
\[ 
   \labellist
    \small\hair 2pt
    \pinlabel $\gamma_0$ at 87 120
    \pinlabel $\gamma_1$ at 85 85
    \pinlabel $\gamma_0$ at 230 120
    \endlabellist
\includegraphics{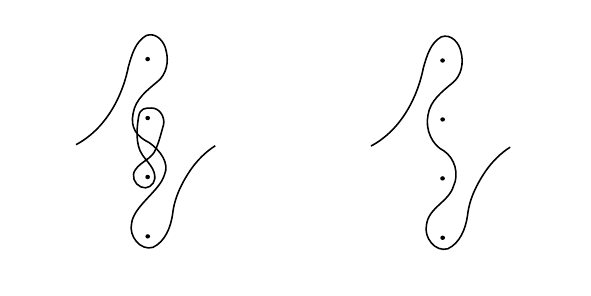} 
\]
This knot is $\gamma_0$-sharp, since $g(K)=2$ and the maximum Alexander grading in the support of $\gamma_0$ is also $2$.
\end{example}

\begin{example}
Consider the knot $K=T_{2,3;2,-1}$, the $(2,-1)$-cable of the right-handed trefoil. Following \cite{Hanselman-Watson:2023}, we have the immersed curve for $K$ looks like:
\[
\includegraphics{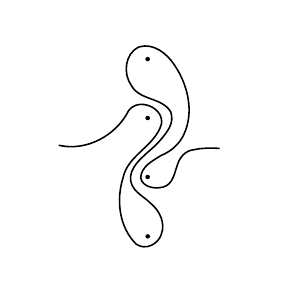} 
\]
which is parametrized by the sequence
\[ (1, -2, -1, 1, -1, 1, 2, -1). \]
The sign convention on the sequence can be interpreted as follows: moving along $\gamma_0$ from left to right, a positive entry indicates curling around the pegs in a clockwise direction, while a negative entry indicates curling around the pegs in a counterclockwise direction.
The knot Floer complex of $K$ is depicted below:
\[
\begin{tikzpicture}[scale=1]
	\draw[step=1, black!30!white, very thin] (-2.5, -0.5) grid (3.5, 3.5);

	\filldraw (0.5, 2.65) circle (2pt) node[label=above:{\lab{U a}}] (a) {};
	\filldraw (1.65, 2.65) circle (2pt) node[label=above:{\lab{b}}] (b) {};
	\filldraw (1.65, 0.5) circle (2pt) node[label=below:{\lab{V^2 c}}] (c) {};
	\filldraw (0.5, 0.5) circle (2pt) node[label=left:{\lab{UV^2 d}}] (d) {};
	\filldraw (0.5, 1.5) circle (2pt) node[label= below left:{\lab{UV e}}] (e) {};
	\filldraw (-0.5, 1.5) circle (2pt) node[label=left:{\lab{U^2V f}}] (f) {};
	\filldraw (-0.5, 2.35) circle (2pt) node[label=left:{\lab{U^2 g}}] (g) {};
	\filldraw (1.35, 2.35) circle (2pt) node[label=right:{\lab{h}}] (h) {};
	\filldraw (1.35, 1.5) circle (2pt) node[label=right:{\lab{V i}}] (i) {};

	\draw[->] (b) to (a);
	\draw[->] (b) to (c);
	\draw[<-] (d) to (c);
	\draw[<-] (d) to (e);
	\draw[->] (f) to (e);
	\draw[->] (e) to (f);
	\draw[->] (g) to (f);	
	\draw[->] (h) to (g);
	\draw[->] (h) to (i);	
	
	\draw[->] (a) to (f);	
	\draw[->] (b) to [bend right] (e);	
	\draw[->] (h) to (e);	
	\draw[->] (i) to (d);	

\end{tikzpicture}
\]
Here, the sign convention on the sequence indicates whether we are moving against the direction of an arrow (for a positive entry) or with the direction of an arrow (for a negative entry), starting from the generator with no incoming or outgoing vertical arrows.
The diagonal arrows are necessary to ensure that $\d^2=0$. This knot is $\gamma_0$-sharp, since both its genus and the maximum Alexander grading support of $\gamma_0$ is 2.
\end{example}

In contrast to the above examples, any nontrivial slice knot $K$ is not $\gamma_0$-sharp, since $\gamma_0$ for a slice knot is the immersed curve associated to the unknot, that is, a horizontal line, and thus the maxiumum Alexander grading in the support of $\gamma_0$ is zero, which does not equal the genus of $K$ if $K$ is nontrivial. 

Our primary computation using knot Floer homology will involve \emph{L-space knots}, that is, knots in $S^3$ that admit a positive surgery to an L-space. If $K$ is an L-space knot, then its connected complex (over either $\F[U,V]$ or $\F[U,V]/(UV)$) is exactly  its knot Floer complex $\CFK(K)$. Hence, for an L-space knot $K$, the knot Floer complex is equal to its standard complex representative, which can be completely described by a sequence of integers. Moreover, this sequence can be read off from the Alexander polynomial of $K$.

We illustrate this with an example.

\begin{example}
Consider $K = T_{4,5}$, whose Alexander polynomial is 
\[ \Delta_K(t) = t^6-t^5+t^2-1+t^{-2}-t^{-5}+t^6. \]
Its standard complex is generated by $x_0, x_1, x_2, x_3, x_4, x_5, x_6$, with nonzero differentials
\[ \d x_1 = U x_0 + V^3 x_2, \qquad \qquad \d x_3 = U^2 x_2 + V^2 x_4, \qquad \qquad \d x_5 = U^3 x_4 + V x_6, \]
which is parametrized by the sequence
\[ (1, -3, 2, -2, 3, -1). \]
The immersed curve invariant of $K$ is exactly $\gamma_0$:
\[
\includegraphics{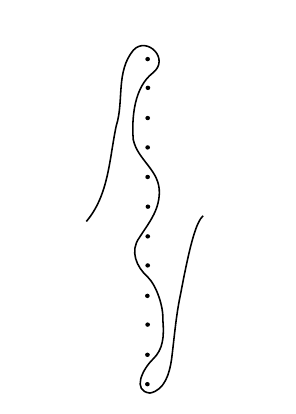}
\]
Note that the sequence $$(a_1, b_1, a_2, b_2, a_3, b_3)=(1, -3, 2, -2, 3, -1)$$ can be observed in the immersed curve as follows: starting from the top, we first see an arc to the right of the pegs (a \emph{right arc}) of length $a_1=1$, followed by an arc to the left of the pegs (a \emph{left arc}) of length $|b_1| = 3$, etc.
\end{example}

Recall that the invariant $\CFK(K)$ can be endowed with an endomorphism $\iota_K$, as in \cite{HendricksManolescu}. In the case where $K$ is an L-space knot, with Alexander polynomial
\[ \Delta_K(t) = t^{a_0} - t^{a_1} + \dots + t^{a_n} \]
for some even integer $n$, we have that $\CFK(K)$ is the standard complex generated by $x_0, \dots, x_n$, with nonzero differentials
\[ \d x_i = U^{a_{i-1}-a_i} x_{i-1} + V^{a_{i+1}-a_i} x_{i+1} \]
for $i$ odd. The \emph{basic involution} on this complex interchanges $x_i$ with $x_{n-i}$, and by \cite[Section 7]{HendricksManolescu}, the basic involution is exactly $\iota_K$.






We conclude this section by showing that satelliting with a 1-bridge braid pattern preserves $\gamma_0$-sharpness:

\begin{proposition}\label{prop:preservedcabling}
Let $P$ be a 1-bridge braid pattern in the solid torus.  
If $K$ is $\gamma_0$-sharp, then $P(K)$ is also $\gamma_0$-sharp.  
In particular, any cable of $K$ is $\gamma_0$-sharp.
\end{proposition}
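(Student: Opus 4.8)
The plan is to track the immersed curve invariant of $K$ through the satellite operation, exploiting that it records both the genus and $\gamma_0$-sharpness as heights of components. Recall two structural facts. By \cite{HRW:2024}, the immersed multicurve $\gamma(K)$ in the punctured torus has a unique homologically essential component, which by \cite[Proposition~2]{Hanselman-Watson:2023} is exactly $\gamma_0(K)$ (the connected complex over $\F[U,V]/(UV)$), together with finitely many null-homotopic closed components; in particular $\gamma_0(K)$ is connected and is drawn in taut position. And the genus of $K$ is the maximum Alexander grading (maximum vertical coordinate) attained by $\gamma(K)$ as a whole \cite{OS-genus}, while $K$ is $\gamma_0$-sharp precisely when the single component $\gamma_0(K)$ already attains that maximum. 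So it is enough to prove, for a 1-bridge braid pattern $P$ of winding number $w$: (i) $\gamma_0(P(K)) = P_*(\gamma_0(K))$, where $P_*$ is the induced operation on immersed curves; (ii) $P_*$ sends a taut essential curve of top height $h$ to one of top height $w\,h + g(P(U))$, where $P(U)\subset S^3$ is the satellite of the unknot; and (iii) $g(P(K)) = w\,g(K) + g(P(U))$.

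For (i), I would invoke the satellite formula for immersed curves: by the pairing theorem of bordered Heegaard Floer homology \cite{LOT}, $\gamma(P(K))$ is computed by pairing $\gamma(K)$ with the invariant of the pattern solid torus, and for a 1-bridge braid pattern this is an explicit, concretely described operation $P_*$ on immersed multicurves (for cables, the cabling operation of \cite{Hanselman-Watson:2023}). The feature of $P_*$ we use is that it replaces a curve by $w$ parallel copies of itself together with a modification that is local --- a braiding of the $w$ strands, plus at most one bridge. Hence a null-homotopic component of $\gamma(K)$, contained in an embedded disk disjoint from the modification locus, maps to $w$ parallel copies of itself in $w$ disjoint disks, so again to null-homotopic components; while the essential component $\gamma_0(K)$ maps to a single connected curve of essential homology class, the connectivity being exactly the statement that $P(K)$ is a knot. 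Since $\gamma_0(P(K))$ is by definition the unique essential component of $\gamma(P(K)) = P_*(\gamma(K))$, this gives (i).

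For (ii) and (iii): reading off the explicit form of $P_*$, the $w$ parallel copies contribute a factor of $w$ to the top height, and the local modification contributes a fixed additive constant, which one identifies with $g(P(U))$ by specializing to $K=U$ (so for cables, $g(P(U)) = g(T_{p,q}) = (p-1)(|q|-1)/2$); crucially the resulting tall part is not removable by a homotopy. Thus $P_*(\gamma_0(K))$ has top height $w\,h + g(P(U))$ when $\gamma_0(K)$, in taut position, has top height $h$; combined with (i) and the $\gamma_0$-sharpness of $K$ (so $h = g(K)$), $\gamma_0(P(K))$ has top height $w\,g(K) + g(P(U))$. Finally, $g(P(K)) \ge w\,g(K)+g(P(U))$ automatically (the top height of $\gamma_0(P(K))$ is at most that of $\gamma(P(K))$, which equals $g(P(K))$), and $g(P(K)) \le w\,g(K)+g(P(U))$ by the standard cut-and-paste Seifert surface for a satellite (glue $w$ parallel copies of a minimal-genus Seifert surface for $K$ to a minimal-genus surface for the pattern knot inside the solid torus). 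Hence $\gamma_0(P(K))$ attains top height $g(P(K))$, i.e.\ $P(K)$ is $\gamma_0$-sharp. The ``in particular'' statement follows since the $(p,q)$-cable pattern ($p\ge 2$) is a 1-bridge braid pattern with $w = p$ and $P(U) = T_{p,q}$; iterated cables follow by iterating the Proposition.

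The step I expect to be the main obstacle is (ii): verifying that the satellite operation does not lose height, i.e.\ that the tall part of $P_*(\gamma_0(K))$ is genuinely tall and cannot be retracted, and that the additive shift is exactly $g(P(U))$. This is precisely where the 1-bridge braid hypothesis is essential: the $P$-dependent modification in $P_*$ lives in a bounded region, so near the highest point of $\gamma_0(K)$ the operation is literally ``$w$ parallel strands'', which scales height by $w$ and preserves tautness, while the modification only shifts heights by a constant --- and identifying that constant with $g(P(U))$ amounts to the special case $K=U$, namely that $P(U)$ is $\gamma_0$-sharp (immediate for cables, since $P(U) = T_{p,q}$). For patterns with more bridges, or with winding number $0$ such as Whitehead doubles, this argument breaks down, and indeed $\gamma_0$-sharpness need not be preserved there. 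A secondary point to pin down is the structural claim that $\gamma(K)$ contains $\gamma_0(K)$ as its unique essential component with no extra null-homotopic summand, which is what makes the passage from (i) to the height computation valid.
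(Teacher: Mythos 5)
Your approach is genuinely different from the paper's, and while your structural instincts are sound, the proof as written has real gaps.

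The paper's proof is short: by Chen--Hanselman (their Theorem 1.3), for a 1-bridge braid pattern $P$ the satellite operation on immersed curves is realized by an explicit diffeomorphism $f_P$ of the punctured torus, so $\gamma(P(K)) \simeq f_P(\gamma(K))$. A diffeomorphism is applied simultaneously to every component of the multicurve, and (because of the specific form of $f_P$) it preserves the containment relation ``all compact components lie within the vertical span of $\gamma_0$.'' That containment is precisely $\gamma_0$-sharpness, once one recalls that the genus is the maximal vertical coordinate in the support of the whole curve. No quantitative genus formula is needed; there is nothing to compute, only a structural property to observe being preserved. Your ``$w$ parallel copies plus a local modification'' picture is the Hanselman--Watson cabling description rather than the diffeomorphism formulation, and this is what forces you into explicit height bookkeeping.

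The gaps this creates are concrete. First, you yourself flag step (ii) --- that $P_*$ sends a taut essential curve of top height $h$ to one of top height $wh + g(P(U))$ and that the tall part is not retractable --- as ``the main obstacle,'' but you do not actually close it; you gesture at locality of the modification but give no argument that the height increase is exactly the claimed constant, or that no homotopy can pull down the new extremum. Second, the identification of that constant with $g(P(U))$ is said to ``amount to the special case $K=U$,'' i.e.\ to $\gamma_0$-sharpness of $P(U)$; but that is an instance of the very proposition you are proving, so as stated the argument is circular outside the cable case where $P(U)=T_{p,q}$ is an L-space knot. Third, your upper bound in (iii) uses the standard satellite Seifert surface, whose genus is $w\,g(K) + g_V(P)$ where $g_V(P)$ is the genus of the pattern \emph{in the solid torus}; for the two bounds in your squeeze to meet you need $g_V(P) = g(P(U))$, which is clear for cables but is asserted, not proven, for general 1-bridge braids. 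None of these issues arise in the paper's argument, because the diffeomorphism $f_P$ is applied to $\gamma(K)$ as a whole and the relative positions of components --- not their absolute heights --- are all that matter.
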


\begin{proof}
By \cite[Theorem 1.3]{ChenHanselman}, we have that for $P$ a 1-bridge braid pattern in the solid torus, $\gamma(P(K))$ is homotopic to $f_P(\gamma(P(K)))$, where $f_P$ is the diffeomorphism from \cite[Section 6.4]{ChenHanselman}. If the Seifert genus of $K$ is determined by $\gamma_0$, then the compact components of $\gamma(K)$ all lie within the vertical span of $\gamma_0(K)$, and this property is preserved by $f_P$. It follows that the Seifert genus of $P(K)$ is also determined by $\gamma_0$, as desired.
\end{proof}

\section{Proof of the main theorem}\label{sec:mainthm}

%

We now prove the main theorem, whose statement we recall below.  
The argument follows closely that of Baker~\cite[Theorem~3]{Baker:2016}.

\setcounter{theorem}{0}
\renewcommand{\thetheorem}{1.1}
\begin{theorem}\label{thm:mainbody}
A connected sum of $\gamma_0$-sharp fibered knots  is ribbon if and only if it is of the form $K \mathbin{\#} -K$.
\end{theorem}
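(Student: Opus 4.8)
The plan is to follow Baker's proof of \cite[Theorem~3]{Baker:2016} essentially line for line, substituting ``$\gamma_0$-sharp fibered knot'' for ``tight fibered knot'' throughout, and feeding in the handlebody analysis of Miyazaki \cite{Miyazaki:1994}. The ``if'' direction requires nothing new, and in fact no hypothesis on the summands: for any knot $K$ the connected sum $K \mathbin{\#} -K$ is classically ribbon, so this implication is immediate.

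For ``only if'', I would take $J = K_1 \mathbin{\#} \cdots \mathbin{\#} K_n$ with each $K_i$ a $\gamma_0$-sharp fibered knot and $J$ ribbon, and induct on the number of nontrivial factors; if there are none then $J = U = U \mathbin{\#} -U$ and we are done. Two facts come for free: a connected sum of fibered knots is fibered, so $J$ is fibered, and $J$, being ribbon, is homotopically ribbon, so by Casson--Gordon \cite{Casson-Gordon:1983} the closed monodromy of $J$ extends over a handlebody $V$ with $\partial V$ equal to the closed-up fiber $\widehat{F} = \widehat{F}_1 \# \cdots \# \widehat{F}_n$; equivalently, $J$ is homotopy ribbon concordant to the unknot.

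The decisive new ingredient, which I would establish first, is the following extension of \cite[Lemma~2]{Baker:2016}: \emph{a $\gamma_0$-sharp fibered knot is minimal with respect to homotopy ribbon concordance among all fibered knots in $S^3$} --- if $K$ is $\gamma_0$-sharp and fibered and $K$ is homotopy ribbon concordant to a fibered knot $K'$, then $K = K'$; in particular a nontrivial $\gamma_0$-sharp fibered knot is not homotopically ribbon. The mechanism: a homotopy ribbon concordance is by definition a concordance in a homology cobordism, so by Remark~\ref{rmk:homologyconcordance} it leaves $\gamma_0$ unchanged, $\gamma_0(K') = \gamma_0(K)$. Since $K$ is $\gamma_0$-sharp, the maximum Alexander grading in the support of $\gamma_0(K')$ equals $g(K)$, which forces $g(K') \geq g(K)$ since this maximum never exceeds the genus; as the genus cannot increase on passing to a homotopy-ribbon-smaller knot, $g(K') = g(K)$, and combining this equality with the fiberedness of both knots --- which makes the top knot Floer homology rank one --- and a Zemke-type injectivity for knot Floer homology under homotopy ribbon concordance should pin the concordance down and yield $K = K'$. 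The case $K' = U$ is immediate from Remark~\ref{rmk:homologyconcordance} alone: a $\gamma_0$-sharp knot bounding a disk in a homology four-ball is concordant to the unknot in a homology cobordism, hence has trivial $\gamma_0$, hence genus zero, hence is the unknot.

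Granting the minimality statement, I would run Miyazaki's argument \cite{Miyazaki:1994} as in Baker's proof: examining how the connect-sum curves of $\widehat{F}$ meet a complete meridian system of $V$ and compressing, one finds either that some $K_i$ is itself homotopically ribbon --- hence $K_i = U$ by minimality, so $J$ is the connected sum of the remaining $\gamma_0$-sharp fibered knots and induction applies directly --- or that the handlebody structure matches two of the pieces $\widehat{F}_i, \widehat{F}_j$, forcing $K_j = -K_i$ and leaving the complementary connected sum $\#_{k \neq i, j} K_k$ ribbon, which by induction has the form $K' \mathbin{\#} -K'$, so that $J = (K_i \mathbin{\#} K') \mathbin{\#} -(K_i \mathbin{\#} K')$. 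I expect the main obstacle to be the minimality lemma: one must control $\gamma_0$ --- equivalently the connected complex over $\F[U,V]/(UV)$, equivalently the immersed curve --- under homotopy ribbon concordance sharply enough to forbid a nontrivial descent from a $\gamma_0$-sharp fibered knot, taking over the role that contact-geometric rigidity of the tight contact structure plays in Baker's argument; a secondary point to check with care is that the complementary connected sum produced by Miyazaki's argument is genuinely ribbon, so that the induction stays within the class of ribbon knots.
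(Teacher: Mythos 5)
Your overall strategy matches the paper's: establish that a $\gamma_0$-sharp fibered knot is minimal with respect to homotopy ribbon concordance among fibered knots in $S^3$, then feed this into Miyazaki's machinery (Theorem~5.5 of \cite{Miyazaki:1994}, with the $S^3$-versus-homology-sphere adjustment from the remark after Lemma~1.3 there and from Baker's proof). The ``if'' direction is handled the same trivial way.

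The gap is in how you pass from the $\gamma_0$ constraint to $K = K'$ in the minimality lemma. You correctly observe that invariance of $\gamma_0$ under homology concordance (Remark~\ref{rmk:homologyconcordance}) plus $\gamma_0$-sharpness gives $g(K') \geq g(K)$, and genus monotonicity then gives $g(K') = g(K)$. But then you write that this genus equality together with fiberedness and ``a Zemke-type injectivity \dots\ should pin the concordance down and yield $K = K'$.'' That is not a proof, and it is not obvious that it can be made into one: Zemke-type injectivity of $\HFKhat$ under a ribbon concordance, combined with rank one at the top Alexander grading for both knots, does not by itself say anything about the knots being isotopic --- two distinct fibered knots of the same genus both have $\HFKhat$ of rank one there, and injectivity in that grading is automatic. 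What you actually need is the sharper dichotomy from Gordon \cite[Lemma~3.4]{Gordon:1981}: if $J$ is (homotopy) ribbon concordant to a fibered knot $J'$, then either $J = J'$ or $g(J) > g(J')$ strictly. (Gordon states this for ribbon concordance, but the same argument works for homotopy ribbon concordance, as the paper notes.) Once you have the dichotomy, the $\gamma_0$ inequality $g(J) \leq g(J')$ rules out the strict-drop branch and you are done immediately --- no Floer-theoretic upgrade from genus equality to knot equality is needed. This replacement closes the gap and gives precisely the paper's argument.

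One further small point: in your final paragraph you propose to re-run Miyazaki's argument by hand rather than invoking \cite[Theorem~5.5]{Miyazaki:1994} as a black box; that is legitimate but adds nothing, since Miyazaki's theorem is stated exactly in the form needed once minimality is established, and the only wrinkle --- that Miyazaki's minimality is among fibered knots in homology spheres while yours is among fibered knots in $S^3$ --- is handled by the geometrization argument already recorded in \cite{Miyazaki:1994} and \cite{Baker:2016}.
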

\renewcommand{\thetheorem}{\thesection.\arabic{theorem}}

\begin{proof}
Assume that $J$ is a $\gamma_0$-sharp fibered knot.  
We will show that $J$ is minimal with respect to homotopy ribbon concordance among all fibered knots in $S^3$, thereby generalizing~\cite[Lemma~2]{Baker:2016}.  
Suppose that $J$ is homotopy ribbon concordant to a fibered knot $J'$ in $S^3$.  
Then, by~\cite[Lemma~3.4]{Gordon:1981}, we have either $J = J'$ or $g(J) > g(J')$.  
Although the result is stated for ribbon concordance, the same argument applies to homotopy ribbon concordance.  
By the assumption that $J$ is $\gamma_0$-sharp and by Remark~\ref{rmk:homologyconcordance}, we have $g(J) \leq g(J')$, which implies that $J = J'$.

We now apply~\cite[Theorem~5.5]{Miyazaki:1994}, which states that if a connected sum of knots that are minimal with respect to homotopy ribbon concordance among all fibered knots in homology spheres is ribbon, then it must be of the form $K \mathbin{\#} -K$. On the other hand, the remark following~\cite[Lemma~1.3]{Miyazaki:1994} and the proof of~\cite[Theorem~3]{Baker:2016} state that the condition ``among all fibered knots in homology spheres'' can be relaxed to ``among all fibered knots in~$S^3$'' using the Geometrization Conjecture~\cite{Perelman:2002-1, Perelman:2003-1, Perelman:2003-2}.  
This completes the proof.
 \end{proof}

\begin{remark}
If we consider ribbon concordance instead of homotopy ribbon concordance,  
then each $\gamma_0$-sharp fibered knot $J$  is minimal not only among all fibered knots in $S^3$,  
but also among all knots in $S^3$.  
This follows from an observation of Miyazaki~\cite{Miyazaki:2018}, which builds on results of Silver~\cite{Silver:1992} and Kochloukova~\cite{Kochloukova:2006},  
showing that if there is a ribbon concordance from $J$ to $J'$ and $J$ is fibered, then $J'$ must also be fibered.
\end{remark}

Next, we prove Corollary~\ref{cor:main}. By iterated cables, we mean knots obtained by performing at least one nontrivial cabling operation. The statement is as follows:

\setcounter{corollary}{1}
\renewcommand{\thecorollary}{1.2}
\begin{corollary}\label{cor:mainbody}
Either distinct iterated cables of tight fibered knots are linearly independent in the smooth knot concordance group, or the slice-ribbon conjecture is false.
\end{corollary}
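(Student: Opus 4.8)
The plan is to derive Corollary~\ref{cor:mainbody} directly from Theorem~\ref{thm:mainbody} together with Proposition~\ref{prop:preservedcabling}. Suppose, for contradiction, that the slice--ribbon conjecture holds, but that some finite collection of distinct iterated cables of tight fibered knots fails to be linearly independent in the smooth knot concordance group. Then there is a nontrivial relation, i.e.\ a connected sum
\[
C := \connsum_i n_i K_i
\]
with the $K_i$ distinct iterated cables of tight fibered knots, $n_i \in \Z$ not all zero, that is smoothly slice. Rewriting $n_i K_i$ as a connected sum of copies of $K_i$ or $-K_i$, we may regard $C$ as a connected sum of iterated cables of tight fibered knots and their mirrors. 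Under the slice--ribbon conjecture, $C$ is then ribbon.

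The key point is that every summand of $C$ is a $\gamma_0$-sharp fibered knot. Indeed, each $K_i$ is built from a tight fibered knot by finitely many nontrivial cablings; tight fibered knots are $\gamma_0$-sharp by Remark~\ref{rmk:tightfiberedgammashparp}, cabling patterns are $1$-bridge braid patterns, so by Proposition~\ref{prop:preservedcabling} (applied once for each cabling step) $K_i$ is $\gamma_0$-sharp. Moreover $K_i$ is fibered, since satellites with fibered patterns (in particular cable patterns) of fibered knots are fibered. The mirror $-K_i$ is likewise $\gamma_0$-sharp and fibered (mirroring reverses the roles of $U$ and $V$ and of the vertical span, so $\gamma_0$-sharpness is preserved). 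Hence $C$ is a connected sum of $\gamma_0$-sharp fibered knots, and Theorem~\ref{thm:mainbody} applies: $C$ must be of the form $K \mathbin{\#} -K$.

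It remains to see that $C = K \mathbin{\#} -K$ contradicts the assumed nontriviality of the relation, i.e.\ that a connected sum of the $K_i^{\pm 1}$ of the form $K\mathbin{\#}-K$ forces the relation to be trivial. This is where the hypothesis that the $K_i$ are \emph{distinct} is used: iterated cables of tight fibered knots are prime (cables of nontrivial knots are prime), so the connected sum decomposition of $C$ into primes is unique up to reordering. Matching the prime factors of $C = K \mathbin{\#} -K$ with those on the other side, the multiset of summands $\{n_i \cdot K_i\}$ (with signs) must pair off each $K_i$ with a $-K_i$; since the $K_i$ are pairwise distinct and no $K_i$ is its own reverse mirror (a cable of a nontrivial knot is chiral in the relevant sense, or more simply the signature or $\tau$ already obstructs $K_i \simeq -K_i$ for these knots), this forces $n_i = 0$ for all $i$, contradicting nontriviality.

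The main obstacle is the last step: one must be careful that ``$C$ of the form $K\mathbin{\#}-K$'' genuinely contradicts a nontrivial linear relation, which requires both the primeness of iterated cables and the fact that none of these knots is concordant (indeed isotopic) to its own reverse mirror. The primeness is classical for cables; the chirality can be handled uniformly, for instance by noting that each $K_i$ has nonzero $\tau$ or a nonvanishing signature function inherited from the underlying positive torus-like structure, so $K_i \ne -K_i$ in concordance. Everything else is bookkeeping with connected sums and the already-established Theorem~\ref{thm:mainbody} and Proposition~\ref{prop:preservedcabling}.
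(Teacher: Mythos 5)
Your overall strategy matches the paper's: reduce to a ribbon connected sum, apply Theorem~\ref{thm:mainbody}, use primeness of cables and uniqueness of prime decomposition, and then rule out the mirror-pairing by a concordance invariant. However, the final step has two genuine gaps.

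First, after concluding that $C$ has the form $L \# -L$, the mirror-closedness of the prime factor multiset does \emph{not} force $K_i$ to pair with $-K_i$ for the same index $i$: a priori some $K_i$ could equal $-K_j$ for $j \neq i$, which is perfectly compatible with the $K_i$ being pairwise distinct and with no $K_i$ being its own reverse mirror. If the input collection happened to contain a mirror pair $K_1, K_2 = -K_1$, the relation $K_1 \# K_2$ would be ribbon and the corollary would be false, so one genuinely needs an argument that this cannot happen. Your proposal only addresses the case $j = i$.

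Second, the invariant you invoke does not do the job. You suggest that ``each $K_i$ has nonzero $\tau$.'' This fails for iterated cables with negative cabling parameters: for a genus-one tight fibered knot $K$ (e.g., the right-handed trefoil), Hom's cabling formula gives $\tau(K_{2,-3}) = 2\tau(K) + \tfrac{(2-1)(-3-1)}{2} = 2 - 2 = 0$. The signature suggestion is likewise unverified and not obviously robust under negative cabling. The paper instead uses the $\varepsilon$-invariant: by \cite{Livingston:2004} and \cite{Hom:2014}, every iterated cable of a tight fibered knot has $\varepsilon = 1$ (independent of the signs of the cabling parameters), while $\varepsilon(-K) = -1$. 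This single fact simultaneously rules out $K_i = -K_j$ for \emph{all} $i, j$, closing both gaps at once. You should replace the $\tau$/signature argument by the $\varepsilon$ argument, and phrase the conclusion so that it excludes $K_i = -K_j$ for arbitrary $j$, not just $j = i$.
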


\begin{proof}
Assume the slice--ribbon conjecture is true.  
Let $K_1, K_2, \dots, K_n$ be distinct iterated cables of tight fibered knots, and suppose that  
\[
K := c_1K_1 \# c_2K_2 \# \cdots \# c_nK_n
\]
is slice, and hence ribbon, where $c_iK_i$ denotes the $c_i$-fold connected sum of $K_i$ if $c_i > 0$, the $(-c_i)$-fold connected sum of $-K_i$ if $c_i < 0$, and the unknot if $c_i = 0$.  
Without loss of generality, we may assume that $c_1 \geq 0$. It suffices to show that $c_1 = 0$.

Recall that a tight fibered knot $J$ satisfies $\tau(J) = g(J)$ by~\cite[Theorem~4]{Livingston:2004}.  
Moreover, by~\cite[Corollary~4 and Theorem~2]{Hom:2014}, we have $\varepsilon(J) = 1$, and any iterated cable of $J$ also satisfies $\varepsilon = 1$.  
In particular, for each $i$, we have $\varepsilon(K_i) = 1$ and $\varepsilon(-K_i) = -1$~\cite[Proposition~3.6]{Hom:2014}. If $c_1 > 0$, then since $K$ is ribbon and the knots $K_1, \dots, K_n$ are assumed to be distinct, Theorem~\ref{thm:main} implies that $K_1 = -K_i$ for some $i$.  
However, this leads to a contradiction, since $\varepsilon(K_1) = 1$ while $\varepsilon(-K_i) = -1$.  
Therefore, we must have $c_1 = 0$.
\end{proof}

Finally, we prove Corollary~\ref{cor:examples}, restated below for convenience:

\setcounter{corollary}{1}
\renewcommand{\thecorollary}{1.3}
\begin{corollary}\label{cor:examplesbody}
Let $K$ and $J$ be distinct iterated cables of tight fibered knots. Then, for each pair of distinct integers $q_1$ and $q_2$ that are relatively prime to $p$, the knot
\[
P(K, J, p, q_1, q_2) := K_{p,q_1} \mathbin{\#} -K_{p,q_2} \mathbin{\#} J_{p,q_2} \mathbin{\#} -J_{p,q_1}
\]
is algebraically slice but not ribbon.
\end{corollary}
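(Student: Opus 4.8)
The plan is to establish the two claimed properties separately, as they come from entirely different sources. For the algebraic sliceness, I would invoke the work of Livingston--Melvin~\cite{Livingston-Melvin:1983}: if $W$ is any knot with Seifert form $V$, then $W_{p,q} \mathbin{\#} -W_{p,q'}$ has a Seifert form that is metabolic up to the contribution of $V \oplus -V$-type blocks, and more precisely the algebraic concordance class of $W_{p,q}$ depends on $q$ only through data that cancels in the combination appearing in $P(K,J,p,q_1,q_2)$. Concretely, I would note that $K_{p,q_1} \mathbin{\#} -K_{p,q_2}$ and $J_{p,q_2} \mathbin{\#} -J_{p,q_1}$ each have the algebraic concordance class of a cable of the form $U_{p,q_1} \mathbin{\#} -U_{p,q_2}$ of the unknot (since the ``$K$-part'' of the Seifert form of a satellite is $p$ copies of the companion's form, which appears with opposite sign in the two cabling parameters and hence cancels), and then observe that $U_{p,q_1} \mathbin{\#} -U_{p,q_2} \mathbin{\#} U_{p,q_2} \mathbin{\#} -U_{p,q_1}$ is algebraically slice because it is a connected sum of a knot with its reverse mirror. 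This argument is insensitive to the signs of $q_1$ and $q_2$ and to the choice of $K$ and $J$, matching the first remark after the corollary statement.

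For the non-ribbonness, I would apply Theorem~\ref{thm:main} (equivalently, Theorem~\ref{thm:mainbody}). First, by Proposition~\ref{prop:preservedcabling}, each of the knots $K_{p,q_1}, K_{p,q_2}, J_{p,q_1}, J_{p,q_2}$ is $\gamma_0$-sharp, since iterated cables of tight fibered knots are $\gamma_0$-sharp (tight fibered knots are $\gamma_0$-sharp by Remark~\ref{rmk:tightfiberedgammashparp}, and $1$-bridge braided patterns — in particular cabling patterns — preserve this). Moreover, each cable of a fibered knot is fibered, so $P(K,J,p,q_1,q_2)$ is a connected sum of $\gamma_0$-sharp fibered knots. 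By Theorem~\ref{thm:main}, it is ribbon if and only if it is of the form $W \mathbin{\#} -W$. So it remains to rule this out. I would do this using the invariant $\varepsilon$ of~\cite{Hom:2014}: since $K$ and $J$ are iterated cables of tight fibered knots, $\varepsilon(K_{p,q_i}) = \varepsilon(J_{p,q_i}) = 1$ for all relevant $i$ (as $\varepsilon = 1$ is preserved under cabling once it holds for the companion), so the $\varepsilon$-invariants of the four summands are $+1, -1, +1, -1$ respectively. If $P(K,J,p,q_1,q_2) = W \mathbin{\#} -W$, then by the behavior of the connected complex over $\F[U,V]/(UV)$ under connected sum, matching up the summands (which are prime, or at least have well-defined local equivalence classes) would force a pairing of the four summands into $\{$summand, its reverse mirror$\}$ pairs; but examining $\varepsilon$ and genus, the only candidate pairings are $K_{p,q_1} \leftrightarrow -J_{p,q_1}$ and $-K_{p,q_2} \leftrightarrow J_{p,q_2}$ (up to relabeling), which would force $K_{p,q_1}$ and $J_{p,q_1}$ to be concordant, hence $K$ and $J$ to be concordant iterated cables — and since they are assumed distinct, this is the contradiction. (Alternatively, and perhaps more cleanly, I can reduce directly to Corollary~\ref{cor:mainbody}: $P(K,J,p,q_1,q_2)$ is a connected sum of iterated cables of tight fibered knots, so if it were slice then Corollary~\ref{cor:mainbody} would force all coefficients to vanish, i.e. the four cable summands to cancel in pairs, which again forces $K$ and $J$ concordant.)

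I expect the main obstacle to be the bookkeeping in the non-ribbonness step: one must be careful that ``distinct iterated cables of tight fibered knots'' together with distinct cabling parameters genuinely produces distinct knot Floer local equivalence classes among the four summands, so that the only way $W \mathbin{\#} -W$ can occur is the forbidden one. The cleanest route is to avoid this entirely by noting that $P(K,J,p,q_1,q_2)$ is literally of the form $c_1 K_1 \# c_2 K_2 \# \cdots$ with $K_i$ distinct iterated cables of tight fibered knots and not all $c_i$ zero — indeed the coefficient of $K_{p,q_1}$ is $+1$ unless $K_{p,q_1}$ coincides with one of the other three summands, and one checks using the genus and the companion that this cannot happen when $K \neq J$ and $q_1 \neq q_2$ — so Corollary~\ref{cor:mainbody} applies directly to conclude that $P(K,J,p,q_1,q_2)$ is not slice, hence not ribbon. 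I would present the argument in this reduced form, with the verification that the four summands are genuinely distinct knots (hence do not cancel) isolated as the one point requiring the hypotheses $K \neq J$ and $q_1 \neq q_2$ together with the distinctness of $\gamma_0$-data for distinct iterated cables.
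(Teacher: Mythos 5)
Your preferred route (reducing to Corollary~\ref{cor:mainbody} after checking that the four cable summands $K_{p,q_1}$, $K_{p,q_2}$, $J_{p,q_1}$, $J_{p,q_2}$ are pairwise distinct knots) is the same as the paper's approach, and the substance of the paper's proof is exactly that distinctness check. However, there is a logical slip in how you invoke Corollary~\ref{cor:mainbody}: you write that it ``applies directly to conclude that $P(K,J,p,q_1,q_2)$ is not slice, hence not ribbon.'' That direction is wrong. Corollary~\ref{cor:mainbody} is a disjunction conditioned on the slice--ribbon conjecture and cannot by itself establish non-sliceness; if it could, Corollary~\ref{cor:examplesbody} would assert the much stronger conclusion ``not slice,'' which it does not. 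What the \emph{proof} of Corollary~\ref{cor:mainbody} establishes unconditionally, via Theorem~\ref{thm:main} and the $\varepsilon$-invariant argument, is that a connected sum of distinct iterated cables of tight fibered knots with not-all-zero coefficients is not \emph{ribbon}. Since Corollary~\ref{cor:examplesbody} claims only non-ribbonness, the corrected argument does what you want, but as written the intermediate claim of non-sliceness is unjustified, and the implication ``not slice, hence not ribbon'' is an unhelpful detour even though it is vacuously sound.

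On the distinctness step, your ``genus and companion'' sketch is vaguer than what the paper actually does: it distinguishes $K_{p,q_1}$ from $K_{p,q_2}$ via Alexander polynomials, and distinguishes $K_{p,q_i}$ from $J_{p,q_j}$ by combining uniqueness of the JSJ decomposition with Gordon--Luecke (so an isotopy of nontrivial cables forces an isotopy of companions). You should spell this out, since it is the one place where $K\neq J$ and $q_1\neq q_2$ enter. Your ``Route A,'' which applies Theorem~\ref{thm:main} directly and then argues via a putative pairing of the four summands using the connected complex and $\varepsilon$, is considerably more involved than needed and leans on an unjustified unique-decomposition claim for connected complexes; the paper avoids this entirely by deferring to the (already proved) Corollary~\ref{cor:mainbody} argument. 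The algebraic-sliceness part is fine: you and the paper both attribute it to Livingston--Melvin, and your heuristic about the companion's contribution to the Seifert form cancelling across the two cabling parameters is a reasonable gloss on that reference.
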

\renewcommand{\thecorollary}{\thesection.\arabic{corollary}}

\begin{proof}
By Corollary~\ref{cor:main} and the fact that $P(K, J, p, q_1, q_2)$ is known to be algebraically slice by~\cite{Livingston-Melvin:1983}, it suffices to show that $K_{p,q_1}$, $K_{p,q_2}$, $J_{p,q_1}$, and $J_{p,q_2}$ are distinct. This will follow in particular from showing that $K_{p,q_1}$ is not isotopic to any of $K_{p,q_2}$, $J_{p,q_1}$, or $J_{p,q_2}$ and then applying symmetry.

The first case is immediate, since we are assuming $q_1 \neq q_2$, which implies that $K_{p,q_1}$ and $K_{p,q_2}$ have distinct Alexander polynomials.   For the remaining cases, recall that if $K_{p,q}$ is isotopic to $J_{p',q'}$ for some $p, p' > 1$, then $K$ is isotopic to $J$. This follows because any homeomorphism between the complements of the cabled knots induces a homeomorphism between the complements of $K$ and $J$, by the uniqueness of the JSJ decomposition~\cite{Jaco-Shalen:1979, Johannson:1979}, and knot complements determine knot types~\cite{Gordon-Luecke:1989}.   This completes the proof.
\end{proof}

%
%
%

\section{Proof of Proposition \ref{prop:examples}}\label{sec:computations}

The goal of this section is to prove Proposition~\ref{prop:examples}, namely, to determine when the concordance invariants derived from the knot Floer complex vanish for
\[
P(K, U, 2, q_1, q_2) := K_{2,q_1} \mathbin{\#} -K_{2,q_2} \mathbin{\#} T_{2,q_2} \mathbin{\#} -T_{2,q_1}.
\]
In Section~\ref{sec:background}, we recalled that the connected complex over $\F[U,V]/(UV)$ of $K$ can be described by a sequence of nonzero integers, and that for an L-space knot, the connected complex coincides with $\CFK(K)$. The following proposition describes the effect of $(2,q)$-cabling on an L-space knot $K$.

%

%



\begin{proposition}\label{prop:cableL}
Let $K$ be an L-space knot whose knot Floer complex is described by the integer sequence $$(a_1, b_1, \dots, a_n, b_n).$$
\begin{enumerate}
	\item\label{it:cableL1} If $q > 4g(K)$, then the knot Floer complex of the cable $K_{2,q}$ is described by the sequence
	\[ (a'_1, b'_1, \dots, a'_n, b'_n, 1, -1, 1, -1, \dots, 1, -1, -b'_n, -a'_n, \dots, -b'_1, -a'_1), \]
	where 
	\begin{enumerate}
		\item $a'_i$ is a sequence of length $2a_i-1$ of alternating $\pm 1$, starting with $1$,
		\item $b'_i = 2b_i-1$, and
		\item the subsequence $1, -1, \dotsm 1, -1$ in the middle has length $q-4g(K)-1$.
	\end{enumerate}
	\item\label{it:cableL2} If $q < 4g(K)$, then the connected complex of the cable $K_{2,q}$ is described by the sequence
	\[ (a'_1, b'_1, \dots, a'_n, b'_n, -1, 1, -1, 1, \dots, -1, 1, -b'_n, -a'_n, \dots, -b'_1, -a'_1), \]
	where 
	\begin{enumerate}
		\item $a'_i$ is a sequence of length $2a_i-1$ of alternating $\pm 1$, starting with $1$,
		\item $b'_i = 2b_i-1$ if $i \neq n$ and $b'_n = 2b_n$, and
		\item the subsequence $-1, 1, \dotsm -1, 1$ in the middle has length $4g(K)-1-q$.
	\end{enumerate}
\end{enumerate}
\end{proposition}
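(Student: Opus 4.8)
The plan is to compute $\CFK(K_{2,q})$ directly from the immersed-curve picture of the cabling operation, following Hanselman--Watson~\cite{Hanselman-Watson:2023} and Chen--Hanselman~\cite{ChenHanselman}. Since $K$ is an L-space knot, $\gamma(K)$ is a single embedded curve $\gamma_0$ with no homotopically trivial components, parametrized by the sequence $(a_1,b_1,\dots,a_n,b_n)$; by Proposition~\ref{prop:preservedcabling} and the fact that $K$ is $\gamma_0$-sharp, the cable $K_{2,q}$ is again $\gamma_0$-sharp, and in fact for L-space knots the connected complex is the whole of $\CFK$, so it suffices to track how the curve $\gamma_0$ transforms under the $(2,q)$-satellite pattern. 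Concretely, I would invoke the description of the $(n,m)$-cabling operation on immersed curves: one lifts $\gamma_0$ to the appropriate cover of the solid torus, takes the $n$-fold "doubling," and then applies the linear shear determined by the framing $q$. Reading the resulting curve's peg-winding sequence gives the asserted formula. This is essentially the same computation carried out for small examples in the excerpt (the $(2,\pm 1)$-cables of $T_{2,3}$), just done in general.

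First I would set up notation: write $g=g(K)=a_1+\dots+a_n = -(b_1+\dots+b_n)$ (the top Alexander grading), recall that the curve $\gamma_0$ as drawn winds monotonically, alternating $a_i$ steps of "right arc" and $|b_i|$ steps of "left arc" as in the $T_{4,5}$ example, and recall that the $(2,q)$-cable curve is obtained by taking the $2$-fold cover picture of $\gamma_0$ in the solid torus and re-closing with slope $q$. The key combinatorial step is: each right-arc of length $a_i$ becomes, in the doubled curve, a string of $2a_i-1$ unit arcs alternating $+1,-1,\dots,+1$ (this accounts for the "$a'_i$" blocks), each left-arc of length $|b_i|$ roughly doubles to length $2b_i-1$ (or $2b_n$ in the tail, in the subcritical case, because the final arc interacts with the re-closing), and the middle "filler" of $\pm1$'s of length $|q-4g|-1$ records the discrepancy between the cabling slope $q$ and the "critical" slope $4g$ at which the two halves of the doubled curve just meet. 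The sign of the middle block ($+1,-1,\dots$ vs. $-1,+1,\dots$) and the slight asymmetry in the $b'_n$ term are exactly the bookkeeping that distinguishes $q>4g$ from $q<4g$: above the critical slope the curve stays "taut" and one gets a genuine L-space-knot complex of genus $2g(K)+\lfloor (q-1)/2\rfloor$ or so with the clean sequence in~\eqref{it:cableL1}, while below it the curve develops the extra wrapping that forces one to pass to the connected complex, producing~\eqref{it:cableL2}.

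I expect the main obstacle to be pinning down the endpoint/tail behavior precisely --- i.e., justifying the off-by-one in $b'_n$ and the exact length $|q-4g(K)|-1$ of the central block --- since this is where the re-closing of the doubled curve with slope $q$ interacts with the outermost arcs of $\gamma_0$, and sign conventions and "fencepost" counts are easy to get wrong. I would handle this by carefully normalizing the cabling convention (fixing that $q$ denotes the longitudinal framing and $K_{p,q}\simeq K_{-p,-q}$, as stated in the introduction), checking the formula against the known cases $K=U$ (where $K_{2,q}=T_{2,q}$ has sequence $(1,-1,1,-1,\dots)$ of the appropriate length and one must have $4g(U)=0$), $K=T_{2,3}$ with $q=7>12$? --- no, with $q$ large --- and the $q=\pm1$ computations already displayed in Section~\ref{sec:background}, and then arguing the general case by the local, arc-by-arc nature of the doubling operation. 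A secondary point to verify is that in case~\eqref{it:cableL1} the resulting complex is actually thin/staircase-like enough that it equals $\CFK(K_{2,q})$ on the nose (which it must, since $K_{2,q}$ is then an L-space knot by the classification of cables that are L-space knots, e.g.~\cite{Hedden:2009, Hom:2011}), whereas in case~\eqref{it:cableL2} one only claims the connected complex, so no such identification is needed.
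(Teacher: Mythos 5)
Your proposal follows essentially the same route as the paper: both invoke Hanselman--Watson's cabling formula for immersed curves, apply it to the two-component vertical stretch-and-stagger picture for a $(2,q)$-cable, and then read off the peg-winding sequence of the horizontally compressed curve arc by arc, identifying the $a'_i$ blocks of length $2a_i-1$, the doubled-and-shifted $b'_i$ arcs, and the middle filler block of length $|q-4g(K)|-1$ coming from the connecting arc between the two copies. The paper's actual proof is a more careful but equally terse execution of exactly this plan, and your observations about why case~\eqref{it:cableL1} yields the full knot Floer complex (since $K_{2,q}$ is then an L-space knot) while case~\eqref{it:cableL2} only yields the connected complex are correct and match the statement.
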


\begin{example}
Consider the L-space knot $T_{4,5}$. The sequence description for the knot Floer complex of $T_{4,5}$ is $$(1, -3, 2, -2, 3, -1)$$ and its genus is $6$. Then the $(2, 27)$-cable of $T_{4,5}$ is described by the sequence
\[ (\underbrace{1}_{a'_1}, \underbrace{-7}_{b'_1}, \underbrace{1, -1, 1}_{a'_2}, \underbrace{-5}_{b'_2}, \underbrace{1, -1, 1, -1, 1}_{a'_3}, \underbrace{-3}_{b'_3}, \underbrace{1, -1}_{\textup{middle}}, \underbrace{3}_{-b'_3}, \underbrace{-1, 1, -1, 1, -1}_{-a'_3}, \underbrace{5}_{-b'_2}, \underbrace{-1, 1, -1}_{-a'_2}, \underbrace{7}_{-b'_1}, \underbrace{-1}_{-a'_1}), \]
where the middle sequence has length $27-4g(T_{4,5})-1=2$.
\end{example}

\begin{proof}
The proof relies on Hanselman-Watson's cabling formula \cite{Hanselman-Watson:2023} in terms of immersed curves.

For a $(2,q)$-cable, their formula is as follows. Take a copy of the immersed curve $\gamma$ for $K$ and stretch it by a factor of two; call the resulting curve $\gamma'_1$. Now to the right of $\gamma'_1$, take a second copy of the immersed curve $\gamma$, stretch it by a factor of two, and shift down by $q$ units; call the resulting curve $\gamma'_2$. Connect the right endpoint of $\gamma'_1$ to the left endpoint of $\gamma'_2$ by an arc $\gamma'_{1.5}$. Now compress the picture horizontally so that the pegs for $\gamma_1$ and $\gamma_2$ lie in a single vertical line. Let $\gamma_i$, $i=1, 1.5, 2$ denote the image of $\gamma'_i$ after horizontal compression.

We first consider the case $q > 4g(K)$. Consider $\gamma_1$. The $a_i$ refer to the right arcs of $\gamma$ and we see that horizontal compression turns a right arc of length $a_i$ into a sequence of right and left arcs of length one, where there are $a_i$ right arcs and $a_i-1$ left arcs. Similarly, the $b_i$ refer to the left arcs of $\gamma$, and now the horizontal compression turns a left arc of length $b_i$ into a left arc of length $2b_1+1$. 

Next, we consider $\gamma_{1.5}$. Since $\gamma'_{1.5}$ lies in a vertical strip between the pegs of $\gamma'_1$ and $\gamma'_2$, horizontal compression results in a sequence of right and left arcs of length one. The total number of such right and left arcs is $q-4g(K)-1$.

Lastly, we consider $\gamma_2$, which follows as for $\gamma_1$, with the roles of $a_i$ and $b_i$ reversed.

The case $q < 4g(K)$ is similar.
\end{proof}

We now give a connected sum formula for certain standard complexes. 

\begin{proposition}\label{prop:K+T}
Let $K$ be an L-space knot whose knot Floer complex is described by the integer sequence
\[ (a_1, b_1, \dots, a_n, b_n, -b_n, -a_n, \dots, -b_1, -a_1)\]
where $a_i=1$ for $i=1, \dots, n$. Let $q>2$. Then $K\# T_{2,q}$ is locally equivalent to 
\[ (a_1, b_1, \dots, a_n, b_n, \underbrace{1, -1, 1, -1, \dots, 1, -1}_{q-1\textup{ entries}}, -b_n, -a_n, \dots, -b_1, -a_1)\]
and $K\# T_{2,-q}$ is locally equivalent to 
\[ (a_1, b_1, \dots, a_n, b_n, \underbrace{-1, 1, -1, 1, \dots, -1, 1}_{q-1\textup{ entries}}, -b_n, -a_n, \dots, -b_1, -a_1).\]
Furthermore, $\iota_K$ on $K\# T_{2,q}$ is given by the basic involution.
\end{proposition}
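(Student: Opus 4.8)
The plan is to compute the connected complex of $K \# T_{2,\pm q}$ by a tensor--product computation and then to pin down the involution by a symmetry argument. First I would invoke the K\"unneth formula for the knot Floer complex of a connected sum, $\CFK(K \# T_{2,\pm q}) \simeq \CFK(K) \otimes_{\F[U,V]} \CFK(T_{2,\pm q})$, together with Zemke's connected sum formula, so that $\iota_{K\#T_{2,\pm q}}$ is chain homotopic to $\iota_K \otimes \iota_{T_{2,\pm q}}$. Since $K$ is an L-space knot and $T_{2,q}$, $T_{2,-q}$ are an L-space knot and its mirror, all three complexes are standard complexes: $\CFK(K)$ is the staircase $C(S)$ with $S = (a_1, b_1, \dots, a_n, b_n, -b_n, -a_n, \dots, -b_1, -a_1)$, while $\CFK(T_{2,q}) = C(\underbrace{1,-1,\dots,1,-1}_{q-1})$ and $\CFK(T_{2,-q}) = C(\underbrace{-1,1,\dots,-1,1}_{q-1})$; and by the L-space knot discussion in Section~\ref{sec:background}, the involutions on all three are the basic ones. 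Passing to the quotient by $(UV)$, where the connected complex (equivalently $\gamma_0$) lives, it then suffices to identify the connected complex of $C(S) \otimes \CFK(T_{2,\pm q})$ over $\F[U,V]/(UV)$ together with its induced involution.

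Second, I would make this identification by an explicit change of basis over $\F[U,V]/(UV)$. Picture $\CFK(T_{2,\pm q})$ as a ``length-one zigzag'' --- a path $y_0, \dots, y_{q-1}$ with each $\partial y_{2j-1}$ equal to $U y_{2j-2} + V y_{2j}$ for $+q$, or with $U$ and $V$ interchanged for $-q$ --- and $\CFK(K) = C(S)$ as the symmetric staircase it is. The tensor product is then a ``staircase of zigzags,'' and a routine sequence of changes of basis splits off a direct sum of $n(q-1)$ locally trivial four-generator ``box'' summands (the count coming from a rank computation), organized around the pairs of turning generators of the two factors. The surviving complex is the staircase $C(S)$ with the zigzag of $\CFK(T_{2,\pm q})$ spliced into its central corner; this is the unique location compatible with the $U \leftrightarrow V$ symmetry of the connected complex of a knot and with the genus being $g(K) + \tfrac{q-1}{2}$, and it yields exactly the sequence $S'$ of the statement --- central block $(1,-1,\dots,1,-1)$ in the $+q$ case and $(-1,1,\dots,-1,1)$ in the $-q$ case, the two differing precisely because of the sign pattern of the zigzag. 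The one point to check carefully here is that the changes of basis leave $C(S')$ with no stray longer differentials, i.e.\ that the spliced zigzag meets the staircase cleanly at a single corner so that $\partial^2 = 0$.

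For the involution, note that by the first step $\iota_{K\#T_{2,q}} \simeq \iota_K \otimes \iota_{T_{2,q}}$ acts on $C(S) \otimes C(1,-1,\dots,1,-1)$ by the evident symmetry $x_i \otimes y_j \mapsto x_{4n-i} \otimes y_{q-1-j}$, which is a genuine involution because both tensor factors act by genuine basic involutions. This symmetry preserves the ``staircase of zigzags'' structure used in the splitting, so the change of basis can be arranged equivariantly --- each box summand being sent to itself or to the mirror box --- and the induced involution on the surviving complex $C(S')$ is again the evident symmetry, which is exactly the basic involution on $C(S')$. Equivalently, once one knows the $\iota_K$-connected complex is carried by a standard complex with a symmetric sequence, the induced involution is forced to be the grading-compatible conjugation symmetry, which for a staircase-shaped complex is the basic involution.

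The step I expect to be the main obstacle is the bookkeeping in the second step: organizing the $n(q-1)$ box cancellations so that the surviving complex is visibly $C(S')$ and so that the $\iota_K$-action simultaneously descends to the basic involution, all while confirming that no extraneous differentials survive. A less computational alternative would be to run the argument through the immersed-curve formalism, using that $\gamma(T_{2,\pm q})$ is a simple closed curve whose effect on the central portion of $\gamma_0(K)$ is geometrically transparent; but the change-of-basis route is more self-contained given what the paper has already set up.
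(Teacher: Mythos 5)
Your overall strategy for the non-involutive part matches the paper's: split the tensor product $\CFK(K) \otimes \CFK(T_{2,\pm q})$ into a staircase direct summand realizing the desired sequence plus $n(q-1)$ locally trivial box summands. The paper does this by writing down explicit basis sets $X$, $Y_{i,j}$, $Z_{i,j}$, $Y'_{i,j}$, $Z'_{i,j}$ rather than by an iterative change-of-basis reduction, but the underlying decomposition and the rank count agree with yours.

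The genuine gap is in the involution step. You assert that by Zemke's connected sum formula $\iota_{K\# T_{2,q}}$ is chain homotopic to $\iota_K \otimes \iota_{T_{2,q}}$. This is not the formula: Zemke's result carries a correction term of the schematic form $(\Phi_K \otimes \Psi_{T_{2,q}})\circ(\iota_K\otimes\iota_{T_{2,q}})$, and for a staircase L\hyph space knot with steps of length $b_i$ the basepoint map $\Phi$ is not zero (on the odd\hyph index generators it contributes, e.g., $b_i U^{b_i-1}$). Consequently the involution on the tensor product is \emph{not} the naive symmetry $x_i \otimes y_j \mapsto x_{4n-i}\otimes y_{q-1-j}$; on generators where both subscripts are odd the correction term survives. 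This is precisely why the paper's box bases $Y'_{i,j}$ contain elements such as $x'_i y'_j + b_i U^{b_i-1}x'_{i+1} y'_{j-1}$: the correction has to be absorbed into the basis so that $\iota_K$ carries $Y_{i,j}$ to $Y'_{i,j}$ on the nose. By contrast the paper's Lemma~\ref{lem:X} observes that on the staircase summand $X$ at least one subscript of each generator is even, which is exactly what kills the $\Phi\otimes\Psi$ term there; this observation is doing real work, not a formality. Without it, your conclusion that "the induced involution on the surviving complex $C(S')$ is again the evident symmetry" is unsupported.

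A smaller point: you propose to reduce mod the ideal $(UV)$ before analyzing the involution, but the statement "$\iota_K$ on $K\# T_{2,q}$ is the basic involution" (and the way it is used later, for $\iota_K$-local equivalence) is a statement over $\F[U,V]$, and the correction term $\Phi\otimes\Psi$ is not a $UV$-multiple in general. So the reduction to $\F[U,V]/(UV)$ cannot be made before the involution is controlled; it can only be used for the first, non-involutive, half of the claim. Fixing the argument requires reinstating the full Zemke formula, identifying exactly where the $\Phi\otimes\Psi$ correction is nonzero, and choosing the box bases so that the splitting is equivariant for the corrected involution — which is what the paper's explicit choice of $Y'_{i,j}$ and $Z'_{i,j}$ accomplishes.
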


We first consider $K \# T_{2,q}$, where $q = 2k + 1>2$.
Let  
\[
\{x_0, x_1, \dots, x_{2n-1}, x_{2n}, x'_{2n-1}, \dots, x'_0\}
\]  
be the standard basis for $K$, and let  
\[
\{y_0, y_1, \dots, y_{k-1}, y_k, y'_{k-1}, \dots, y'_0\}
\]  
be the standard basis for $T_{2,q}$.  Then for $i=0, 1, \dots 2n-1$,
\[ x_i \xleftrightarrow{\iota_K} x'_i \]
and for $j=0, 1, \dots, k-1$, 
\[ y_j \xleftrightarrow{\iota_K} y'_j \]
and
\[ \iota_K(x_{2n})= x_{2n}, \quad \iota_K(y_k)=y_k. \]

Let
\begin{gather}
\begin{aligned}\label{eqn:X}
	X = \{&x_0y_0, \; x_1y_0, \dots, \; x_{2n-1}y_0, \; x_{2n}y_0, \\
		& x_{2n}y_1, \; \dots, \; x_{2n}y_{k-1}, \; x_{2n}y_k, \; x_{2n}y'_{k-1}, \dots, \; x_{2n}y'_1, \\
		& x'_{2n}y'_0, \; x'_{2n-1}y'_0,  \dots, x'_0y'_0\}.
\end{aligned}
\end{gather}
For $i=1, 3, \dots, 2n-1$ and $j=1, 3, \dots, k'$, where throughout $k'$ is the greatest odd number less than or equal to $k$, let
\[ Y_{i, j} = \left\{ x_i y_j, x_{i-1}y_{j}+x_i y_{j-1}, x_{i} y_{j+1} + V^{b_1-1}x_{i+1}y_{j}, x_{i-1}y_{j+1}+V^{b_1-1}x_{i+1} y_{j-1}  \right\}, \]
which we can visualize as
\[
Y_{i,j} = \begin{tikzcd}
	 x_{i-1}y_{j}+x_i y_{j-1} \ar[d, "V"] & x_i y_j \ar[l, "U"] \ar[d, "V"]  \\
	 x_{i-1}y_{j+1}+V^{b_i-1}x_{i+1} y_{j-1} & x_i y_{j+1} + V^{b_i-1}x_{i+1}y_{j}. \ar[l, "U"] 
\end{tikzcd}
\]
(Note that throughout, we mildly abuse notation, and if $j=k'=k$, then by $y_{j+1}$, we actually mean $y'_{j-1}$.)
Similarly, for $i=1, 3, \dots, 2n-1$ and $j=1, 3, \dots, k'-2$, we have
\[ Z_{i, j} = \left\{ x_i y'_j, x_{i-1}y'_{j}+x_i y'_{j+1}, x_{i} y'_{j-1} + V^{b_1-1}x_{i+1}y'_{j}, x_{i-1}y'_{j-1}+V^{b_1-1}x_{i+1} y'_{j+1}  \right\}, \]
which we can visualize as
\[
Z_{i,j} = \begin{tikzcd}
	 x_{i-1}y'_{j}+x_i y'_{j+1} \ar[d, "V"] & x_i y'_j \ar[l, "U"] \ar[d, "V"]  \\
	 x_{i-1}y'_{j-1}+V^{b_i-1}x_{i+1} y'_{j+1} & x_i y'_{j-1} + V^{b_i-1}x_{i+1}y'_{j}. \ar[l, "U"] 
\end{tikzcd}
\]
Symmetrically, for $i=1, 3, \dots, 2n-1$ and $j=1, 3, \dots, k'$, we have nearly the same thing, but with a slightly modification in order for $\iota_K$ to act nicely on this choice of basis:
\[ Y'_{i,j} =  \left\{ x'_i y'_j+ b_i U^{b_i-1}x'_{i+1} y'_{j-1}, x'_{i-1}y'_{j}+x'_i y'_{j-1}, x'_{i} y'_{j+1} + U^{b_i-1}x'_{i+1}y'_{j}, x'_{i-1}y'_{j+1}+U^{b_i-1}x'_{i+1} y'_{j-1} \right\}, \]
which we can visualize as
\[
Y'_{i,j} = \begin{tikzcd}
	 x'_i y'_{j+1} + U^{b_i-1}x'_{i+1}y'_{j} \ar[d, "V"] & x'_i y'_j + b_i U^{b_i-1}x'_{i+1} y'_{j-1}\ar[l, "U"] \ar[d, "V"]  \\
	 x'_{i-1}y'_{j+1}+U^{b_i-1}x'_{i+1} y'_{j-1} & x'_{i-1}y'_{j}+x'_i y'_{j-1} .\ar[l, "U"] 
\end{tikzcd}
\]
(Again, we mildly abuse notation, and if $j=k'=k$, then by $y'_j$, we actually mean $y_j$, and by $y'_{j+1}$, we actually mean $y_{j-1}$.)
For $i=1, 3, \dots, 2n-1$ and $j=1, 3, \dots, k'-2$, we have
\[ Z'_{i, j} = \left\{ x'_i y_j +  b_i U^{b_i-1}x'_{i+1} y_{j-1}, x'_{i-1}y_{j}+x'_i y_{j+1}, x'_{i} y_{j-1} + U^{b_i-1}x'_{i+1}y_{j}, x'_{i-1}y_{j-1}+U^{b_i-1}x'_{i+1} y_{j+1}  \right\}, \]
which we can visualize as
\[
Z'_{i,j} = \begin{tikzcd}
	   x'_i y_{j-1} + U^{b_i-1}x'_{i+1}y_{j} \ar[d, "V"] & x'_i y_j +  b_i U^{b_i-1}x'_{i+1} y_{j-1} \ar[l, "U"] \ar[d, "V"]  \\
	 x'_{i-1}y_{j-1}+U^{b_i-1}x'_{i+1} y_{j+1} & x'_{i-1}y_{j}+x'_i y_{j+1}. \ar[l, "U"] 
\end{tikzcd}
\]

It is straightforward to observe that $X, Y_{i,j}, Z_{i,j}, Y'_{i,j},$ and $Z'_{i,j}$ form a basis for $\CFK(K \# T_{2,q})$.

\begin{lemma}\label{lem:X}
The subcomplex $X$ is described by the sequence 
\[ (a_1, b_1, \dots, a_n, b_n, \underbrace{1, -1, 1, -1, \dots, 1, -1}_{q-1\textup{ entries}}, -b_n, -a_n, \dots, -b_1, -a_1),\]
and the map $\iota_K$ on $X$ is the basic involution.
\end{lemma}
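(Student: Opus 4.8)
The plan is to verify directly that the explicitly-listed generating set $X$ spans a subcomplex of $\CFK(K \# T_{2,q})$, identify its differential, and then match the resulting complex to the asserted standard sequence. First I would check that $X$ is closed under the differential. The differential on $\CFK(K \# T_{2,q}) = \CFK(K) \otimes_{\F[U,V]} \CFK(T_{2,q})$ is $\partial \otimes 1 + 1 \otimes \partial$, so I need to confirm that $\partial$ applied to each of the three families in \eqref{eqn:X} stays inside $X$. The generators $x_i y_0$ (for $i = 0, \dots, 2n$) behave exactly like the standard basis of $\CFK(K)$ tensored with the cycle $y_0$, since $\partial y_0 = 0$; the generators $x_{2n} y_j$ (for $j = 1, \dots, k-1$ and the mirrored $y'_j$) behave like $x_{2n}$ (a cycle, being the top generator) tensored with the standard basis of $\CFK(T_{2,q})$; and the generators $x'_i y'_0$ behave like the standard basis of $\CFK(K)$ (via the $x'$-half) tensored with the cycle $y'_0$. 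Here I use that $x_{2n}$ has no outgoing differential (it is the Alexander-maximal generator of the L-space knot complex) and that $y_0$, $y'_0$ are cycles in the T_{2,q} complex; the only subtlety is the ``corner'' generator $x_{2n} y_0$, where both tensor factors are joined, but since $\partial x_{2n} = 0$ and $\partial y_0 = 0$ this term is itself a cycle and causes no trouble. So $X$ is a subcomplex.

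Next I would read off the differential on $X$ restricted to these generators. Along the first block $x_0 y_0, \dots, x_{2n} y_0$, the differential is $\partial(x_i y_0) = U^{?} x_{i-1} y_0 + V^{?} x_{i+1} y_0$ for $i$ odd, which reproduces the sequence $(a_1, b_1, \dots, a_n, b_n)$ exactly as for $K$. Along the middle block $x_{2n} y_0, x_{2n} y_1, \dots, x_{2n} y_k, x_{2n} y'_{k-1}, \dots, x_{2n} y'_0$, the differential is $x_{2n} \otimes (\partial \text{ on } T_{2,q})$, and since $T_{2,q}$ with $q = 2k+1$ is the staircase with all steps of length one, this contributes the sequence $1, -1, 1, -1, \dots, 1, -1$ with $q - 1 = 2k$ entries. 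Finally, along the last block $x'_{2n} y'_0, \dots, x'_0 y'_0$, the differential reproduces the tail $(-b_n, -a_n, \dots, -b_1, -a_1)$ of the sequence for $K$. Concatenating, and checking that the blocks are glued at the shared generators $x_{2n} y_0$ (end of first block / start of middle) and $x'_{2n} y'_0$ (end of middle / start of last — using that in the standard complex for $K$ the generator $x_{2n}$ of the ``right half'' and $x'_{2n}$ of the ``left half'' are identified, which is exactly the L-space staircase turning around at its bottom corner), gives precisely the claimed sequence $(a_1, b_1, \dots, a_n, b_n, 1, -1, \dots, 1, -1, -b_n, -a_n, \dots, -b_1, -a_1)$ of length $q-1$ in the middle. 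I should also confirm this identified complex is indeed in standard form, i.e., that consecutive entries alternate in sign and the local ``zig-zag'' shape is correct; this follows since the $a_i$ are all $+1$ by hypothesis, the middle entries alternate by construction, and the $b_i$ are negative (as $b_i < 0$ in the standard complex of an L-space knot turned around), so no simplification is needed.

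For the statement about $\iota_K$, I would use that $\iota_{K \# T_{2,q}}$ is (up to homotopy) $\iota_K \otimes \iota_{T_{2,q}}$ by Zemke's connected sum formula, and that on both factors the involution is the basic involution (for $K$ by hypothesis/Proposition, for $T_{2,q}$ by the discussion preceding this lemma). The basic involution on $K$ sends $x_i \leftrightarrow x'_i$ and fixes $x_{2n}$; on $T_{2,q}$ it sends $y_j \leftrightarrow y'_j$ and fixes $y_k$. I would then check that the induced map on the subcomplex $X$ sends $x_i y_0 \leftrightarrow x'_i y'_0$ and $x_{2n} y_j \leftrightarrow x_{2n} y'_j$, fixing $x_{2n} y_0$ and $x_{2n} y_k$ — which is exactly the basic involution on the complex described by the palindromic sequence above. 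The only thing to verify carefully is that $X$ is preserved by $\iota_K \otimes \iota_{T_{2,q}}$ (so that restricting makes sense) and that no correction term is needed to land back in standard form; the former is immediate from the described action on generators, and the latter holds because $X$ is already a staircase and the tensor involution visibly permutes its generators.

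The main obstacle I anticipate is the bookkeeping at the two gluing points between blocks — making sure that the shared generator $x_{2n}y_0$ is counted once (not twice) and that the identification of $x_{2n}$ with $x'_{2n}$ in the L-space staircase for $K$ correctly propagates so that the concatenated sequence has exactly $q-1$ middle entries and not $q$ or $q-2$. This is a finite but fiddly index-chasing argument. A secondary point of care is confirming that $X$ really is a direct summand as a \emph{complex} (so that ``subcomplex described by the sequence'' is literally correct), rather than merely a subquotient; but this is handled by the earlier claim that $X, Y_{i,j}, Z_{i,j}, Y'_{i,j}, Z'_{i,j}$ together form a basis, so that $X$ is a sub-box-complex and the remaining basis elements account for the complement.
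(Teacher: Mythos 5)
Your argument for the sequence description of $X$ matches the paper's approach (which is essentially ``compute the differential''), and the bookkeeping you sketch is sound. However, your treatment of $\iota_K$ has a genuine gap.

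You write that $\iota_{K \# T_{2,q}}$ is, up to homotopy, simply $\iota_K \otimes \iota_{T_{2,q}}$ by Zemke's connected sum formula. This is not the formula. Zemke's connected sum formula for the knot involution contains an additional correction term of the form $(\Phi \otimes \Psi) \circ (\iota_1 \otimes \iota_2)$, where $\Phi$ and $\Psi$ are the formal $U$- and $V$-derivatives of the differential. This term is not null-homotopic in general, and for the full complex $\CFK(K \# T_{2,q})$ it genuinely contributes; indeed this is exactly why the paper's bases $Y'_{i,j}$ and $Z'_{i,j}$ contain the extra summands $b_i U^{b_i-1}x'_{i+1}y'_{j-1}$ (and similarly), which arise from this correction term acting on generators $x'_i y'_j$ with both indices odd. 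Your proposal, by dropping the correction term from the formula entirely, cannot distinguish between the generators where the correction does something and the generators where it does not, and so the claim that $\iota_K$ restricts to the basic involution on $X$ is not actually established.

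The fix is short but essential, and it is exactly what the paper's proof supplies: observe that $\Phi$ and $\Psi$ are nonzero on the standard basis of an L-space knot complex only on generators of odd index (they shift parity), so $(\Phi \otimes \Psi)\circ(\iota_1 \otimes \iota_2)$ vanishes on any tensor-factor generator $x_i y_j$ (or $x_i y'_j$, $x'_i y'_j$) in which at least one of the two subscripts is even. Since every element of $X$ in display~\eqref{eqn:X} has an even subscript on at least one of its two factors (it is either $\cdot\, y_0$, $\cdot\, y'_0$, or $x_{2n}\,\cdot$), the correction term is identically zero on $X$. Only after this check does it follow that the full involution restricts on $X$ to $\iota_K \otimes \iota_{T_{2,q}}$, which is what you then correctly identify as the basic involution of the palindromic staircase. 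Without this parity argument, the final sentence of your last paragraph (``no correction term is needed \dots the tensor involution visibly permutes its generators'') is asserting the conclusion rather than proving it.
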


\begin{proof}
Computing the differential on $X$, one sees that $X$ has the sequence description above.

We now consider $\iota_K$ on $X$. Note that since $X$ consists of elements of the form $x_i y_j$, $x_i y'_j$, and $x'_i y'_j$ where at least one of the subscripts is even, the $(\Phi \otimes \Psi) \circ (\iota_1 \otimes \iota_2)$ term always vanishes. Hence we have that $\iota_K$ sends the $\ell^\textup{th}$ entry of \eqref{eqn:X} to the $\ell^{\textup{th}}$ from the end, which is exactly the basic involution.
\end{proof}

\begin{lemma}\label{lem:YZ}
The map $\iota_K$ takes $Y_{i,j}$ to $Y'_{i,j}$ and $Z_{i,j}$ to $Z'_{i,j}$.
\end{lemma}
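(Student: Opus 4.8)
The plan is to verify directly that the involutive map $\iota_K = (\iota_1 \otimes \iota_2) + (\Phi \otimes \Psi)\circ(\iota_1 \otimes \iota_2)$ on $\CFK(K)\otimes\CFK(T_{2,q})$ carries the $\F[U,V]$-span of $Y_{i,j}$ onto the span of $Y'_{i,j}$, and similarly $Z_{i,j}$ onto $Z'_{i,j}$. Since $\iota_K$ is only well-defined up to homotopy, what I will actually check is that the four explicit generators of $Y_{i,j}$ listed in the definition are sent, under the chain-homotopy-representative $(\iota_1\otimes\iota_2) + (\Phi\otimes\Psi)\circ(\iota_1\otimes\iota_2)$, to the four explicit generators of $Y'_{i,j}$, in the corresponding positions of the two square pictures (top-right to top-right, etc.). The whole point of the slightly asymmetric choice of $Y'_{i,j}$—with the correction term $b_i U^{b_i-1}x'_{i+1}y'_{j-1}$ appended to $x'_i y'_j$—is precisely to make this an equality on the nose rather than merely up to homotopy, so that once the formula for the map on generators is written down the matching is forced.

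First I would recall the tensor-product formula for $\iota_K$ on a connected sum from \cite[Section~7]{HendricksManolescu} (as stated in the excerpt): $\iota_1$ is the basic involution on $\CFK(K)$ exchanging $x_i\leftrightarrow x'_i$ (fixing $x_{2n}$), $\iota_2$ the basic involution on $\CFK(T_{2,q})$ exchanging $y_j\leftrightarrow y'_j$ (fixing $y_k$), and $\Phi$, $\Psi$ the formal derivative maps $\partial/\partial U$ of the respective differentials (up to the usual conventions). Applying $\iota_1\otimes\iota_2$ to, say, $x_i y_j$ with $i,j$ odd gives $x'_i y'_j$, which is exactly the ``main term'' of the corresponding generator of $Y'_{i,j}$; the task is then to show that the $(\Phi\otimes\Psi)\circ(\iota_1\otimes\iota_2)$ correction produces precisely the term $b_i U^{b_i-1}x'_{i+1}y'_{j-1}$ and nothing else. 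This is a local computation: $\Phi$ applied to $x'_i$ picks out the $U$-derivative of $\partial x'_i = U^{b_i-1}x'_{i+1} + \dots$ and $\Psi$ applied to $y'_j$ picks out the $U$-derivative of $\partial y'_j$ (which contributes a $y'_{j-1}$), and the coefficient $b_i$ arises from differentiating $U^{b_i}$ (after accounting for the shift in exponents coming from the stretched/cabled normalization, where the relevant power is $U^{b_i}$ on $x'_i$, not $U^{b_i-1}$). For the three ``derived'' generators of $Y_{i,j}$—the ones of the form $x_{i-1}y_j + x_i y_{j-1}$, etc.—at least one subscript is even, so by the same observation used in the proof of Lemma~\ref{lem:X} the $(\Phi\otimes\Psi)\circ(\iota_1\otimes\iota_2)$ term vanishes and $\iota_1\otimes\iota_2$ alone does the job, sending them to the matching derived generators of $Y'_{i,j}$. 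The argument for $Z_{i,j}\mapsto Z'_{i,j}$ is identical with the obvious relabeling (using $y'_j$ in place of $y_j$, and the primed/unprimed conventions at the $j=k'=k$ boundary as flagged in the parenthetical remarks).

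The main obstacle I anticipate is bookkeeping rather than conceptual: keeping straight the various sign/convention choices (the precise definition of $\Phi$ and $\Psi$—whether one uses $\partial_U$ or $\partial_V$, and on which tensor factor—and the abuse of notation at $j = k' = k$ where $y_{j+1}$ means $y'_{j-1}$), and making sure the coefficient that falls out of the derivative is exactly $b_i$ and not, say, $b_i$ modulo $2$ (over $\F = \F_2$ one must check whether $b_i$ is even or odd matters—here the coefficient $b_i U^{b_i-1}$ should be read with $b_i \in \F_2$, i.e.\ the term is present iff $b_i$ is odd, and one should confirm that the definition of $Y'_{i,j}$ is consistent with this, or else that the relevant $b_i$ are always odd for the complexes in question, which follows from $b'_i = 2b_i - 1$ being odd in Proposition~\ref{prop:cableL}). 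Once these conventions are pinned down, each of the four component checks for $Y_{i,j}$ and each of the four for $Z_{i,j}$ is a one-line verification, so I would organize the proof as a short table of ``apply the formula to generator $\star$, obtain generator $\star'$'' lines rather than prose.
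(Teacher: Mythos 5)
Your proposal is correct and takes essentially the same route as the paper: a direct check with the representative $(\iota_1\otimes\iota_2)+(\Phi\otimes\Psi)\circ(\iota_1\otimes\iota_2)$, observing that the correction term survives only on the generator with both subscripts odd, where it produces exactly $b_i U^{b_i-1}x'_{i+1}y'_{j-1}$ (coefficient read mod $2$), while all other generators of $Y_{i,j}$, $Z_{i,j}$ involve an even subscript in each summand and are carried by $\iota_1\otimes\iota_2$ alone (with $U\leftrightarrow V$ swapped) to the matching generators of $Y'_{i,j}$, $Z'_{i,j}$. When writing it up, just pin down the two conventions you flagged: $\Psi$ is the $V$-derivative of the differential, and the relevant differential is $\partial x'_i = Vx'_{i-1}+U^{b_i}x'_{i+1}$, whose $U$-derivative gives $b_iU^{b_i-1}x'_{i+1}$ as claimed.
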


\begin{proof}
We first consider $Y_{i,j}$ and $Y'_{i,j}$. Recall that $i$ and $j$ are odd. On $Y_{i,j}$, we have
\begin{align*}
	\iota_K(x_iy_j) &= x'_i y'_j + b_i U^{b_i-1}x'_{i+1} y'_{j-1} \\
	\iota_K( x_{i-1}y_{j}+x_i y_{j-1}) &= x'_{i-1}y'_{j}+x'_i y'_{j-1} \\	
	\iota_K( x_i y_{j+1} + V^{b_i-1}x_{i+1}y_{j}) &= x'_i y'_{j+1} + U^{b_i-1}x'_{i+1}y'_{j} \\	
	\iota_K( x_{i-1}y_{j+1}+V^{b_i-1}x_{i+1} y_{j-1}) &= x'_{i-1}y'_{j+1}+U^{b_i-1}x'_{i+1} y'_{j-1},
\end{align*}
and on $Y'_{i,j}$, we have
\begin{align*}
	\iota_K(x'_iy'_j+ b_i U^{b_i-1}x'_{i+1} y'_{j-1} ) &= x_i y_j + 2b_i V^{b_i-1}x_{i+1} y_{j-1} \\
	\iota_K( x'_{i-1}y'_{j}+x'_i y'_{j-1}) &= x_{i-1}y_{j}+x_i y_{j-1} \\	
	\iota_K( x'_i y'_{j+1} + U^{b_i-1}x'_{i+1}y'_{j}) &= x_i y_{j+1} + U^{b_i-1}x_{i+1}y_{j} \\	
	\iota_K( x'_{i-1}y'_{j+1}+U^{b_i-1}x'_{i+1} y'_{j-1}) &= x_{i-1}y_{j+1}+U^{b_i-1}x_{i+1} y_{j-1}.
\end{align*}

Similarly, we consider $\iota_K$ on $Z_{i,j}$ and $Z'_{i,j}$. Again, recall that $i$ and $j$ are odd. On $Z_{i,j}$, we have
\begin{align*}
	\iota_K(x_i y'_j ) &= x'_i y_j +  b_i U^{b_i-1}x'_{i+1} y_{j-1} \\
	\iota_K(  x_{i-1}y'_{j}+x_i y'_{j+1}) &=   x'_{i-1}y_{j}+x'_i y_{j+1} \\	
	\iota_K( x_i y'_{j-1} + V^{b_i-1}x_{i+1}y'_{j}) &= x'_i y_{j-1} + U^{b_i-1}x'_{i+1}y_{j} \\	
	\iota_K(  x_{i-1}y'_{j-1}+V^{b_i-1}x_{i+1} y'_{j+1} ) &= x'_{i-1}y_{j-1}+U^{b_i-1}x'_{i+1} y_{j+1},
\end{align*}
and on $Z'_{i,j}$, we have
\begin{align*}
	\iota_K(x'_i y_j +  b_i U^{b_i-1}x'_{i+1} y_{j-1}) &=  x_i y'_j\\
	\iota_K( x'_{i-1}y_{j}+x'_i y_{j+1}) &=  x_{i-1}y'_{j}+x_i y'_{j+1}  \\	
	\iota_K( x'_i y_{j-1} + U^{b_i-1}x'_{i+1}y_{j}) &= x_i y'_{j-1} + V^{b_i-1}x_{i+1}y'_{j} \\	
	\iota_K(  x'_{i-1}y_{j-1}+U^{b_i-1}x'_{i+1} y_{j+1}) &=  x_{i-1}y'_{j-1}+V^{b_i-1}x_{i+1} y'_{j+1},
\end{align*}
which completes the proof.
\end{proof}

Proposition \ref{prop:K+T} now follows readily:
\begin{proof}[Proof of Proposition \ref{prop:K+T}]
It follows from Lemmas \ref{lem:X} and \ref{lem:YZ} that we have an $\iota_K$-equivariant splitting of $\CFK(K \# T_{2,q})$ with $X$ as a direct summand. The result now follows from the description of $X$. 

The statement for $K \# T_{2,-q}$ follows from an analogous choice of basis; however, since $K \# T_{2,-q}$ is not locally equivalent to an L-space knot, we do not have a notion of a basic involution.
\end{proof}

Finally, we are ready to prove Proposition~\ref{prop:examples}:

\setcounter{proposition}{1}
\renewcommand{\theproposition}{1.4}
\begin{proposition}\label{prop:examples}

Let $K$ be a nontrivial L-space knot, and let $U$ denote the unknot. Then, for each pair of distinct odd integers $q_1$ and $q_2$, the knot
\[
P(K, U, 2, q_1, q_2) := K_{2,q_1} \mathbin{\#} -K_{2,q_2} \mathbin{\#} T_{2,q_2} \mathbin{\#} -T_{2,q_1}
\]
is algebraically slice but not ribbon. 
Moreover, its knot Floer complex  is locally equivalent to that of the unknot if and only if the pair $(q_1, q_2)$ satisfies one of the following:
\begin{itemize}
    \item $q_1, q_2 > 4g(K)$,
    \item $0 < q_1, q_2 < 4g(K)$, or
    \item $q_1, q_2 < 0$,
\end{itemize}
where $g(K)$ is the Seifert genus of $K$. In particular, in all of the above cases, all concordance invariants derived from the knot Floer complex vanish.

Furthermore, when $q_1, q_2 > 4g(K)$, the obstruction coming from the involutive knot Floer complex vanishes. When $q_1, q_2 < 0$, the obstruction coming from the Ozsv\'{a}th--Szab\'{o} $d$-invariant, together with the Hendricks--Manolescu $\overline{d}$- and $\underline{d}$-invariants of the double branched covers, vanish.

\end{proposition}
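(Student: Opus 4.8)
The plan is to settle the easy assertions directly and then reduce everything to a local‑equivalence computation for $\CFK(P(K,U,2,q_1,q_2))$. Algebraic sliceness is due to Livingston--Melvin~\cite{Livingston-Melvin:1983}, and non-ribbonness follows from Theorem~\ref{thm:main}: the unknot $U$ and the L-space knot $K$ are $\gamma_0$-sharp fibered knots, so $T_{2,q_i}=U_{2,q_i}$ and $K_{2,q_i}$ are $\gamma_0$-sharp by Proposition~\ref{prop:preservedcabling}; since $K_{2,q_1},K_{2,q_2},T_{2,q_1},T_{2,q_2}$ are pairwise distinct (as in the proof of Corollary~\ref{cor:examples}, using $q_1\neq q_2$ and that $K_{2,q}$ is never a torus knot), unique prime decomposition prevents $P$ from having the form $A\#-A$. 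For the local-equivalence statement the first step is to combine Propositions~\ref{prop:cableL} and~\ref{prop:K+T} to obtain, for every odd $q$ and up to local equivalence over $\F[U,V]$,
\[
\CFK(K_{2,q})\ \sim\ \begin{cases}\widetilde K_{+}\ \#\ \CFK(T_{2,\,q-4g(K)}), & q>4g(K),\\[2pt]\widetilde K_{-}\ \#\ \CFK(T_{2,\,q-4g(K)}), & q<4g(K),\end{cases}
\]
where $\widetilde K_{+}:=K_{2,\,4g(K)+1}$ and $\widetilde K_{-}:=K_{2,\,4g(K)-1}$ are the two L-space knots appearing with empty ``middle block'' in Proposition~\ref{prop:cableL}; one re-indexes the sequence of $K_{2,q}$ so that Proposition~\ref{prop:K+T} applies with all first-half ``$a$''-entries equal to $1$, and uses that the non-$\gamma_0$ summands are boxes, hence locally trivial over $\F[U,V]$. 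The crucial features are that this identification is $q$-independent within each regime, that $g(\widetilde K_{+})=4g(K)$ while $g(\widetilde K_{-})=4g(K)-1$ (so $\widetilde K_{+}\not\sim\widetilde K_{-}$, distinct L-space knots having distinct connected complexes), and that the connected complexes of $\widetilde K_{\pm}$ both have length $4g(K)$, whence $\widetilde K_{-}\#T_{2,3}$ --- whose connected complex has length $4g(K)+2$ by Proposition~\ref{prop:K+T} --- is locally equivalent to neither. When $q>4g(K)$ the identification is moreover $\iota_K$-equivariant, by the basic-involution assertion of Proposition~\ref{prop:K+T} and the fact that $K_{2,q}$ is then an L-space knot.

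Next I would record two elementary facts over $\F[U,V]$: $T_{2,m}\#T_{2,m'}\sim T_{2,m+m'-1}$ for odd $m,m'\geq 1$, and $T_{2,m}\#-T_{2,m'}\sim T_{2,m-m'+1}$ for odd $m>m'\geq 1$ --- both proved by decomposing the relevant product of staircases as a staircase plus box summands (as in the $T_{2,3}\#T_{2,3}$ example of Section~\ref{sec:background}) and discarding the boxes. Writing $P=\bigl(K_{2,q_1}\#-T_{2,q_1}\bigr)\#-\bigl(K_{2,q_2}\#-T_{2,q_2}\bigr)$ and substituting the regime decompositions, the $\widetilde K_{\pm}\#-\widetilde K_{\pm}$ terms cancel, and a short calculation with the two torus-knot facts shows that $\CFK(T_{2,\,q-4g(K)})\#-\CFK(T_{2,q})$ is locally equivalent to $-T_{2,\,4g(K)+1}$ when $q>4g(K)$ or $q<0$, and to $-T_{2,\,4g(K)-1}$ when $0<q<4g(K)$, independently of $q$. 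Hence when $q_1,q_2$ lie in a common one of the three regimes everything cancels and $\CFK(P)\sim\CFK(U)$ over $\F[U,V]$, so every concordance invariant from the knot Floer complex vanishes; and when $q_1,q_2$ lie in distinct regimes the same substitution leaves the residue $\widetilde K_{+}\#-(\widetilde K_{-}\#T_{2,3})$, $\widetilde K_{+}\#-\widetilde K_{-}$, or $T_{2,3}$ respectively, each of which is non-locally-trivial already over $\F[U,V]/(UV)$ by the length and genus comparisons above (and $\tau(T_{2,3})=1$). This yields the ``if and only if''.

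For the first refinement, when $q_1,q_2>4g(K)$ the regime decomposition is $\iota_K$-equivariant and all four summands of $P$ are L-space knots or their mirrors; using the $\iota_K$-equivariant form of the torus-knot facts (the box summands in $T_{2,m}\#-T_{2,m'}$, $m>m'>0$, split off $\iota_K$-equivariantly), each factor $K_{2,q_i}\#-T_{2,q_i}$ becomes $\iota_K$-locally equivalent to the $q_i$-independent complex $\widetilde K_{+}\#-T_{2,\,4g(K)+1}$, so $\CFK(P)$ is $\iota_K$-locally trivial and the involutive obstruction vanishes. For the second refinement, when $q_1,q_2<0$ I would use that $\Sigma_2$ commutes with connected sum, $\Sigma_2(-J)=-\Sigma_2(J)$, $\Sigma_2(T_{2,q})=L(q,1)=S^3_q(U)$, and $\Sigma_2(K_{2,q})\cong S^3_q(K\#K^{r})$ \cite{Hedden-Kirk-Livingston:2012,Zhang} (with $K^{r}$ the reverse); writing $S^3_q(K\#K^{r})=-S^3_{|q|}(-(K\#K^{r}))$ and noting that $-(K\#K^{r})$ has $\nu^{+}=0$, its relevant $V_j,H_j$ and involutive $\overline V_j,\underline V_j$ all vanish, so $S^3_{|q|}(-(K\#K^{r}))$ has the same $d$-, $\overline d$-, and $\underline d$-invariants as $L(|q|,1)$. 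Thus $\Sigma_2(K_{2,q})$ and $L(q,1)$ share these invariants for $q<0$, and they cancel in $\Sigma_2(P)$, so the $d$-invariant and the Hendricks--Manolescu $\overline d,\underline d$ obstructions of the double branched covers vanish.

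I expect the main obstacle to be the non-triviality of the mixed-case residues $\widetilde K_{+}\#-(\widetilde K_{-}\#T_{2,3})$ and $\widetilde K_{+}\#-\widetilde K_{-}$: because $\tau$ and the other first-order invariants all vanish on $P$, non-triviality has to be extracted from the length and genus of the connected complexes of the distinct L-space knots $\widetilde K_{\pm}=K_{2,\,4g(K)\pm1}$, which requires some care with the classification of connected complexes over $\F[U,V]/(UV)$ by their standard sequences. Secondary technical points are the $\iota_K$-equivariance of the box splittings in the regime $q_1,q_2>4g(K)$ and the vanishing of the involutive $\overline V_j,\underline V_j$ of $-(K\#K^{r})$ forced by $\nu^{+}=0$.
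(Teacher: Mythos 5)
Your overall strategy is genuinely different from the paper's: instead of computing the standard sequence of $K_{2,q_1}\# T_{2,q_2}$ directly from Propositions~\ref{prop:cableL} and~\ref{prop:K+T} and comparing it to that of $K_{2,q_2}\# T_{2,q_1}$ (which is what the paper does), you first package $\CFK(K_{2,q})$ as $\widetilde K_{\pm}\#T_{2,\,q-4g(K)}$ and then reduce everything to torus-knot arithmetic. This is a nice way to make the $q$-independence within each regime transparent, and for the non-involutive ``if and only if'' it works, modulo the usual edge cases in Proposition~\ref{prop:K+T} (when $|q-4g(K)|=1$ the torus-knot factor is the unknot and the decomposition is a tautology). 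Your residue analysis for the ``only if'' direction --- $T_{2,3}$, $\widetilde K_+\#-\widetilde K_-$, and $\widetilde K_+\#-(\widetilde K_-\# T_{2,3})$ --- is essentially the same information the paper extracts by observing that $b'_n$ equals $2b_n-1$ in the regime $q>4g(K)$ and $2b_n$ in the regime $q<4g(K)$; your parenthetical remark that ``$\tau$ and the other first-order invariants all vanish on $P$'' is not quite right, since $\tau(\widetilde K_+\#-\widetilde K_-)=1$, but that only makes your life easier there.

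There are two places where the argument as written does not close. The serious one is the last paragraph, on the involutive $d$-invariants of double branched covers. You assert that because $\nu^+\bigl(-(K\#K^{r})\bigr)=0$, its involutive correction terms $\overline V_j,\underline V_j$ all vanish. This implication is false: $\nu^+=0$ is equivalent to $V_0=0$, but $V_0=0$ does not force $\underline V_0=0$ (the figure-eight knot has $V_0=0$ and $\underline V_0=1$, which is exactly why Hendricks--Manolescu's involutive invariants see more than $d$). So you cannot conclude that $\overline d,\underline d$ of $S^3_{q}(K\#K^{r})$ agree with those of $L(q,1)$ by this route. The paper instead invokes \cite[Theorem~1.6]{HHSZ:2020}, which says directly that for any knot $J$ the $\iota$-complexes of $S^3_{q_1}(J)$ and $S^3_{q_2}(J)$ in the spin structure are locally equivalent whenever $q_1$ and $q_2$ have the same sign; applied to $J=K\#K^{r}$ and $J=U$ this gives the cancellation between the four summands of $\Sigma_2(P)$ without ever needing the involutive invariants of $K\#K^{r}$ itself to vanish. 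You need something of this kind; the $\nu^+$ argument does not supply it.

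The secondary gap is in the $q_1,q_2>4g(K)$ involutive step. Your reduction requires an $\iota_K$-equivariant version of the cancellation $T_{2,m}\#-T_{2,m'}\sim -T_{2,m'-m+1}$, but Proposition~\ref{prop:K+T} only asserts that $\iota_K$ is the basic involution on $K\#T_{2,q}$ for $q>2$ --- not for $K\#T_{2,-q}$ (the paper explicitly declines to make the claim there). You gesture at ``box summands split off $\iota_K$-equivariantly,'' but that is not established. This is fixable: Proposition~\ref{prop:K+T} gives $T_{2,m'}\#T_{2,\,m-m'+1}\sim_{\iota_K}T_{2,m}$ for $m>m'>0$, and since $\iota_K$-local equivalence classes form a group under connected sum one can subtract $[T_{2,m'}]$ from both sides. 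But that step has to be spelled out; the paper sidesteps it entirely by applying Proposition~\ref{prop:K+T} once to the grouping $K_{2,q_1}\#T_{2,q_2}$ versus $K_{2,q_2}\#T_{2,q_1}$, in which both torus-knot summands are positive.
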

\renewcommand{\theproposition}{\thesection.\arabic{proposition}}


\begin{proof}
We will first show that the knot Floer complexes of $K_{2, q_1} \# T_{2, q_2}$ and $K_{2, q_2} \# T_{2, q_1}$ are locally equivalent if and only if the pair $(q_1, q_2)$ satisfies one of the conditions listed above.

For the ``if'' direction, let $K$ be an L-space knot described by the integer sequence $$(a_1, b_1, \dots, a_n, b_n).$$
 Suppose that $q_1, q_2 > 4g(K)$. Observe that if $q_1> 4g(K)$, then $K_{2,q_1}$ is an L-space knot described by the sequence 
\[ (a'_1, b'_1, \dots, a'_n, b'_n, \underbrace{1, -1, 1, -1, \dots, 1, -1}_{q_1-4g(K)-1\textup{ entries}}, -b'_n, -a'_n, \dots, -b'_1, -a'_1) \]
where $a'_i, b'_i$ are as in Proposition \ref{prop:cableL}\eqref{it:cableL1}. Since this sequence satisfies the hypothesis of Proposition~\ref{prop:K+T}, we have that $K_{2,q_1} \# T_{2, q_2}$ is $\iota_K$-locally equivalent to 
\[ (a'_1, b'_1, \dots, a'_n, b'_n, \underbrace{1, -1, 1, -1, \dots, 1, -1}_{q_1+q_2-4g(K)-2\textup{ entries}}, -b'_n, -a'_n, \dots, -b'_1, -a'_1). \]
Reversing the roles of $q_1$ and $q_2$ gives the desired result.

The non-involutive proof for $q_1, q_2 < 4g(K)$ is similar. Indeed, let $K$ be an L-space knot described by the integer sequence $$(a_1, b_1, \dots, a_n, b_n).$$ The connected complex of  $K_{2, q_1}$ is given by
\[ (a'_1, b'_1, \dots, a'_n, b'_n, \underbrace{-1, 1, -1, 1, \dots, -1, 1}_{4g(K)-q_1-1\textup{ entries}}, -b'_n, -a'_n, \dots, -b'_1, -a'_1), \]
where $a'_i, b'_i$ are as in Proposition \ref{prop:cableL}\eqref{it:cableL2}. 


Suppose that $0 < q_2 < 4g(K)$. The complex for $K_{2, q_1}$ is given by
\begin{equation}\label{eq:K2q1}
(a'_1, b'_1, \dots, a'_n, b'_n, \underbrace{-1, 1, -1, 1, \dots, -1, 1}_{4g(K)-q_1-1 \textup{ entries}}, -b'_n, -a'_n, \dots, -b'_1, -a'_1)
\end{equation}
while the complex for $T_{2, q_2}$ is given by
\begin{equation}\label{eq:T2q2}
 (\underbrace{1, -1, 1, -1, \dots, 1, -1}_{q_2-1\textup{ entries}}).
\end{equation}
If $q_2 \geq 4g(K) - q_1$, then the number of consecutive $\pm 1$'s in the middle of~\eqref{eq:K2q1} is at most equal to that in~\eqref{eq:T2q2}, effectively annihilating $4g(J)-q_1-1$ of the entries, yielding a middle sequence of length $q_2-1-(4g(K)-q_1-1)$ and so the connected complex of  $K_{2, q_1} \# T_{2, q_2}$ is
\[ (a'_1, b'_1, \dots, a'_n, b'_n, \underbrace{1, -1, 1, -1, \dots, 1, -1}_{q_1+q_2 -4g(K)\textup{ entries}}, -b'_n, -a'_n, \dots, -b'_1, -a'_1). \]
On the other hand, if $0<q_2 < 4g(K)-q_1$, then the middle sequence of $\pm 1$ in \eqref{eq:K2q1} is longer than the sequence in \eqref{eq:T2q2}, yielding the connected complex of  $K_{2, q_1} \# T_{2, q_2}$ as
\[ (a'_1, b'_1, \dots, a'_n, b'_n, \underbrace{-1, 1, -1, 1, \dots, -1, 1}_{4g(K)-q_1-q_2\textup{ entries}}, -b'_n, -a'_n, \dots, -b'_1, -a'_1).\]
Now suppose that $q_2<0$. Then the connected complex of  $K_{2, q_1} \# T_{2, q_2}$ is given by
\[ (a'_1, b'_1, \dots, a'_n, b'_n, \underbrace{-1, 1, -1, 1, \dots, -1, 1}_{4g(K)-q_1-q_2-2\textup{ entries}}, -b'_n, -a'_n, \dots, -b'_1, -a'_1).\]

By reversing the roles of $q_1$ and $q_2$, we see that if $0 < q_1, q_2 < 4g(K)$, or if $q_1, q_2<0$, then $K_{2, q_1} \# T_{2, q_2}$ and $K_{2, q_2} \# T_{2, q_1}$ are locally equivalent, as desired.


For the ``only if'' direction, first consider the case where $q_1$ and $q_2$ have different signs. As mentioned in the introduction, applying the cabling formula~\cite{Hom:2014} for the $\tau$-invariant allows us to conclude that the $\tau$ of $P(K, U, 2, q_1, q_2)$ is nonzero. Thus, it suffices to consider the case where both $q_1$ and $q_2$ are positive. This case follows from comparing the previous computations, noting that $b'_n$ from Proposition~\ref{prop:cableL} equals $2b_n - 1$ if $q > 4g(K)$, and $2b_n$ if $q < 4g(K)$.

Lastly, we now assume that the odd integers $q_1$ and $q_2$ satisfy $q_1, q_2 < 0$, and consider the double branched cover of $P(K, U, 2, q_1, q_2)$, which is homeomorphic to
\[
\Sigma_2(P(K, U, 2, q_1, q_2)) := S^3_{q_1}(K \# K^r) \mathbin{\#} -S^3_{q_2}(K \# K^r) \# S^3_{q_2}(U) \mathbin{\#} -S^3_{q_1}(U),
\]
where $K^r$ denotes the reverse of $K$, $U$ is the unknot, and $S^3_m(J)$ denotes the $m$-framed Dehn surgery on a knot~$J$.
The Ozsv\'{a}th--Szab\'{o} $d$-invariant is additive under connected sum and behaves naturally under orientation reversal~\cite{Oz-Sz:dinvt}. Moreover, the $d$-invariant of a positive Dehn surgery is determined by the knot's $V_i$-sequence~\cite{Rasmussen:2004, Ni-Wu:2015}, and each $V_i$ vanishes when the $\nu^+$-invariant vanishes~\cite{Hom-Wu:2016}. It is also known that $\nu^+(-(K \# K^r)) = \nu^+(-(U))= 0$~(see e.g. \cite[Theorem 1.5]{BCG:2017}). 
In particular, for each negative integer $m$, there is a natural identification between the spin$^c$ structures of $S^3_m(K \# K^r)$ and those of $S^3_m(U)$, under which the corresponding $d$-invariants coincide. Hence, the same holds for the $d$-invariants of $\Sigma_2(P(K, U, 2, q_1, q_2))$ and those of
\[
S^3_{q_1}(U) \mathbin{\#} -S^3_{q_2}(U) \# S^3_{q_2}(U) \mathbin{\#} -S^3_{q_1}(U),
\]
which bounds a spin rational homology ball. This implies that the obstruction coming from the $d$-invariant vanishes. Furthermore, the vanishing of the obstruction given by the $\overline{d}$- and $\underline{d}$-invariants follows from~\cite[Theorem~1.6]{HHSZ:2020}, which states that the local equivalence class of the $\iota$-complex $(\CFm, \iota)$ associated to $S^3_{q_1}(J)$ coincides with that of $S^3_{q_2}(J)$, with respect to the unique spin structure, for any choice of knot~$J$, as long as the signs of $q_1$ and $q_2$ agree. This concludes the proof.
\end{proof}

\bibliographystyle{alpha}
\bibliography{bib}

\end{document}